\newcommand{\eps}{\varepsilon}
\newcommand{\N}{{\mathbb N}}
\newcommand{\C}{{\mathbb C}}
\newcommand{\Z}{{\mathbb Z}}
\newcommand{\R}{{\mathbb R}}
\newcommand{\h}{{\mathbb H}}
\newcommand{\re}{{\rm Re\,}}
\newcommand{\im}{{\rm Im\,}}
\newcommand{\tef}{transcendental entire function}
\newcommand{\tmf}{transcendental meromorphic function}
\newcommand{\B}{\mathcal{B}}
\newcommand{\sing}{\operatorname{sing}}
\newcommand{\Blog}{\B_{\log}}
\newcommand{\Blogn}{\B_{\log}^{\operatorname{n}}}
\renewcommand{\H}{\mathbb{H}}
\newcommand{\T}{\mathcal{T}}
\newcommand{\dist}{{\rm dist \,}}
\theoremstyle{plain}
\newtheorem{theorem}{Theorem}[section]
\newtheorem{corollary}[theorem]{Corollary}
\newtheorem{proposition}[theorem]{Proposition}
\newtheorem{lemma}[theorem]{Lemma}
\theoremstyle{definition}
\newtheorem{definition}{Definition}[section]
\theoremstyle{remark}
\begin{document}


\title{Are Devaney hairs fast escaping?}

\author{Lasse Rempe}
\address{Department for Mathematical Sciences \\
         University of Liverpool \\
         Liverpool \\
         L69 7ZL\\
         UK}
\email{lrempe@liv.ac.uk}

\author{Philip J. Rippon}
\address{The Open University \\
   Department of Mathematics and Statistics \\
   Walton Hall\\
   Milton Keynes MK7 6AA\\
   UK}
\email{p.j.rippon@open.ac.uk}

\author{Gwyneth M. Stallard}
\address{The Open University \\
   Department of Mathematics and Statistics \\
   Walton Hall\\
   Milton Keynes MK7 6AA\\
   UK}
\email{g.m.stallard@open.ac.uk}



\subjclass{30D05, 37F10}
\thanks{All three authors were supported by the EU Research Training Network CODY. The first author was supported by EPSRC grant EP/E017886/1.}


\begin{abstract}
Beginning with Devaney, several authors have studied transcendental entire functions for which every point
in the escaping set can be connected to infinity by a curve in the escaping set. Such curves are often
called Devaney hairs. We show that, in many cases, every point in such a curve, apart from possibly a
finite endpoint of the curve, belongs to the fast escaping set. We also give an example of a Devaney hair
which lies in a logarithmic tract of a {\tef} and contains no fast escaping points.
\end{abstract}

\maketitle

\begin{center}
{\it For Bob Devaney on the occasion of his sixtieth birthday.}\\
\end{center}

\section{Introduction}
\setcounter{equation}{0}

Let $f:\C\to \C$ be a {\tef} and denote by $f^{n},\,n \in \N\,$, the $n$th iterate
of~$f$. The {\it Fatou set} $F(f)$ is defined to be the set of points $z \in \C$ such
that $(f^{n})_{n \in \N}$ forms a normal family in some neighborhood of $z$.  The
complement of $F(f)$ is called the {\it Julia set} $J(f)$ of $f$. An introduction to the
properties of these sets can be found in~\cite{wB93}.

This paper concerns the {\it escaping set} of $f$, defined as follows:
\[
 I(f) = \{z:  f^n(z) \to \infty \text{ as } n \to \infty \}.
\]
 For a general {\tef}~$f$, this set was first studied  by
Eremenko~\cite{E} who proved that $I(f)\cap J(f)\ne \emptyset$,
$J(f)=\partial I(f)$ and also that all components of $\overline{I(f)}$ are unbounded. Eremenko then asked whether it is also true that
all components of $I(f)$ are unbounded. Further, he asked whether a stronger statement is true, namely whether every point in $I(f)$ can be joined
to infinity by a curve in $I(f)$. This second question is related to a question of Fatou~\cite{pF26} as to whether there are infinitely many curves to infinity in $I(f) \cap J(f)$.

One family for which the answer to all of these questions is `yes' is the family of functions defined by $f(z)= \lambda e^z$, for $0 < \lambda < 1/e$. For such functions, it was shown by Devaney and Tangerman~\cite{DT86} that $F(f)$ is a completely invariant immediate attracting basin and~$J(f)$ is a so called `Cantor bouquet' consisting of uncountably many disjoint simple curves, each with one finite endpoint and the other endpoint at $\infty$. The set $I(f)$ consists of the open curves
(without endpoints) together with some of their finite endpoints; see~\cite{bK99} and~\cite{lR06}. These open curves are known to be contained in the set of points that `zip to infinity'~\cite{SZ} defined by
\[
Z(f) = \{z\in I(f): \frac{1}{n}\ln\ln |f^n(z)|\to\infty \text{ as
}n\to\infty\}.
\]
It was shown by Rempe~\cite[Theorem~1.4]{lR06} that there are endpoints of such curves that escape to infinity at each possible rate. These curves are often referred to as `Devaney hairs'.

In this paper, we define a {\it Devaney hair} of a {\tmf}~$f$ to be a simple curve $\gamma:[0,\infty) \to J(f)$ such that $\gamma((0,\infty))\subset I(f)$ and
\begin{itemize}
\item[(a)] for each $t>0$, $f^n \to \infty$ as $n\to\infty$ uniformly on $\gamma([t,\infty))$;
\item[(b)] for each $n\in\N$, $f^n(\gamma)$ is a simple curve connecting $f^n(\gamma(0))$ to $\infty$.
\end{itemize}

Note that there are several alternative definitions of a Devaney hair that have been used by other authors. Our definition is closely related to the definition of a ray tail given in~\cite{RRRS} -- a ray tail is a Devaney hair for which $\gamma(0) \in I(f)$ and $f^n \to \infty$ as $n \to \infty$ uniformly on $\gamma([0,\infty))$.

The exponential functions belong to the Eremenko-Lyubich class, defined as
\[
 \B = \{f: f \text{ is a transcendental entire function and }\sing (f^{-1}) \text{ is bounded}\},
\]
where the set $ \sing (f^{-1})$ consists of the critical values and the finite asymptotic values of $f$. It was shown by Eremenko and Lyubich~\cite{EL} that $I(f) \subset J(f)$ for every function $f$ in the class $\B$. Several authors have studied functions in the class $\B$ and it is now known that, for many functions in this class, the escaping set has properties similar to those described above for the exponential family. The most general result known of this type is the following, proved in~\cite{RRRS}.

\begin{theorem}\label{RRRSth}
Let $f$ be a finite composition of functions of finite order in the class $\B$ and let $z_0 \in I(f)$. Then $z_0$ can be connected to $\infty$ by a simple curve $\gamma \subset I(f)$ such that
$f^n \to \infty$ uniformly on $\gamma$.
\end{theorem}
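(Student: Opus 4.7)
My strategy is to pass to logarithmic coordinates, which is the standard framework for studying escaping dynamics of class-$\B$ functions. Write $f = g_1 \circ \cdots \circ g_d$ where each $g_j$ is a finite-order function in $\B$. Since $\sing(g_j^{-1})$ is bounded, there exists $R>0$ such that every component of $g_j^{-1}(\{|w|>R\})$ is an unbounded Jordan domain on which $g_j$ acts as a universal cover of $\{|w|>R\}$; these are the logarithmic tracts of $g_j$. The exponential change of variable lifts $g_j$ to a map $G_j \colon \T_j \to \H$ between a disjoint union of log-tracts $\T_j$ and a right half-plane $\H$, with $\exp \circ G_j = g_j \circ \exp$.

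The crucial quantitative input is a \emph{head-start} condition. For a single finite-order function $g$ in $\B$, a Koebe-type distortion estimate combined with the order bound produces a constant $K>1$ such that, on the part of each log-tract where $\re z$ is large,
\[
\re G_j(z) \geq K\,\re z,
\]
together with a monotonicity statement roughly of the form: $\re w > K(\re z+1)$ implies $\re G_j(w) > K(\re G_j(z)+1)$. For the composition $F = G_d \circ \cdots \circ G_1$ one derives an analogous head-start condition by carefully composing the individual estimates for the $G_j$ and restricting attention to the regime where the orbit lies deep inside successive tracts.

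Given $z_0 \in I(f)$, pass to log coordinates and track the itinerary of tracts $(T^0, T^1, T^2, \ldots)$ visited by its orbit once the orbit has entered the tract structure. For each $n$, choose a radial curve in $T^n$ joining the $n$-th iterate of $z_0$ to $\infty$ and pull back inductively through the univalent branches of $F$ prescribed by the itinerary to obtain curves $\gamma^{(n)}$ from $z_0$ to points whose real parts tend to infinity. The head-start condition forces the $\gamma^{(n)}$ to have uniformly bounded diameter on regions of bounded real part, while escaping further and further to the right; a normal-families limit (reparametrised by real part) yields a simple curve $\gamma^*$ from $z_0$ to $\infty$ on which the real part of iterates of $F$ grows uniformly. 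Projecting back by $\exp$ produces the desired arc $\gamma \subset I(f)$ along which $f^n \to \infty$ uniformly.

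The main obstacle is establishing the head-start condition for the composition $F$: a composition of finite-order functions need not itself have finite order, so the single-function argument does not apply directly to $f$. One instead has to exploit the tract structures of the individual factors separately and verify that, after finitely many iterations, escaping orbits sit deep enough in the successive tracts for the composed head-start bounds to take hold. A secondary difficulty is showing that the limit of the $\gamma^{(n)}$ is a topological arc rather than merely a continuum containing $z_0$ and accumulating at $\infty$; this is handled via uniform equicontinuity of the parametrizations, inherited from the contraction of the pullbacks, together with the injectivity of the branches used in the itinerary.
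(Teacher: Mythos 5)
Your overall framework (logarithmic change of variable plus a head-start condition) is the same as the paper's, but the two steps you rely on to actually produce the curve are respectively false and unjustified. First, the pointwise estimate $\re G_j(z)\geq K\,\re z$ on the part of a tract where $\re z$ is large cannot hold: $G_j$ maps each tract conformally onto a half-plane, so points deep in the tract but close to its boundary have images with arbitrarily small real part (already for $f=\exp$, where the transform is essentially $F(w)=e^w$ and $\re F(w)=e^{\re w}\cos\im w$). What finite order really yields (via uniformly bounded slope and wiggling of the tracts, \cite[Theorem~5.6]{RRRS}) is the \emph{linear head-start condition} of Definition~\ref{RRRSth}\hspace{-\labelsep}'s section, i.e.\ Definition~2.1: your ``monotonicity statement'' is roughly its part~(1), but you omit part~(2), the separation property for distinct points with the same address, which is what makes the speed ordering total and is essential for identifying the hair. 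The composition issue, which you correctly flag, is handled in the paper by \cite[Lemma~5.7]{RRRS}, and it needs the bounded slope/wiggling of each factor's tracts, not just half-plane growth estimates ``carefully composed''.

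Second, and more seriously, the central step of your construction --- that the pulled-back radial curves $\gamma^{(n)}$ have uniformly bounded diameter at bounded real parts and converge, by equicontinuity ``inherited from the contraction of the pullbacks'', to an arc through the given point --- is exactly the hard part, and the contraction you invoke is not available in general. The expansion estimate (Lemma~\ref{expansion}, giving $|F'|\geq 2$) holds only where $\re F$ exceeds a fixed constant; near the finite end of the curves, and wherever intermediate images have small real part, there is no uniform expansion, so the naive pullback-plus-normal-families argument works only for disjoint-type maps (tracts compactly contained in the half-plane, as in Bara\'nski's setting), not for an arbitrary escaping point of an arbitrary finite-order composition. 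This is precisely why \cite{RRRS} do not build the hairs as limits of pullbacks: they prove (what the paper records as Lemma~\ref{RRRSle}) that the set $\{w\in I_{\sigma^k(\underline{s})}(F):w\succeq F^k(w_0)\}$ is an arc by means of the speed ordering coming from the full head-start condition and a topological characterization of arcs, and the paper then deduces the theorem by projecting with $\exp$ and pulling back under $f^{N+k}$. As written, your proposal asserts rather than proves the convergence and simplicity of the limit curve, so it does not yet constitute a proof.
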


{\it Remarks.}\quad 1. In Section~2 we outline the proof of Theorem~\ref{RRRSth}; from the proof it is clear that the curve $\gamma$ is unique and that, for some $k\in\N$, the curve $f^k(\gamma)$ is a Devaney hair in the sense defined above.

2. The order, $\rho$, of a function $f$ is defined by
\[
\rho(f) =  \lim \sup_{r \to \infty} \frac{\ln \ln M(r,f)}{\ln r},
\]
where $M(r,f) = \max_{|z|=r} |f(z)|$.

3. Note that if~$f$ is a finite composition of functions in the class~${\mathcal B}$, then~$f\in{\mathcal B}$.

Clearly, for any function satisfying the hypotheses of Theorem~\ref{RRRSth}, the answers to both of Eremenko's questions and to Fatou's question is yes. It is shown, however, in~\cite{RRRS} that there exists a function in the class $\B$ for which all the path-connected components of $I(f)$ are points and so, for this function, the answer to the stronger form of Eremenko's question is no. However, the answer to the weaker form of Eremenko's question for this function is yes by a result of Rempe~\cite{lR07} who showed that all the components of $I(f)$ are unbounded whenever $P(f) = \bigcup_{n=0}^{\infty}f^n(\sing (f^{-1}))$ is bounded.

Bara\'nski~\cite{kB07} also studied functions of finite order in the class $\B$ but with the extra condition that $\sing (f^{-1})$ is contained in a compact subset of  the immediate basin of an attracting fixed point of $f$. With this extra condition he was able to prove the stronger result that $J(f)$ consists of a family of disjoint curves with similar properties to the exponential functions studied by Devaney.

Here we show that points on the curves obtained in Theorem~\ref{RRRSth}, with the possible exception of the endpoints, escape to infinity `as fast as possible'. More precisely we show that these points belong to the {\it fast escaping set} defined by
\[
 A(f) = \{z:\, \text{there exists $L \in \N$ such that } |f^{n+L}(z)| \geq M^n(R,f), \text{ for all $n\in\N$}\}.
\]
Here, $M^{n+1}(R,f) = M\left( M^n(R,f),f \right)$ and $R$ is any value with 
 $R > \min_{z \in J(f)}|z|$. Note that $A(f)$ is independent of the value of~$R$.

The set $A(f)$ was introduced by Bergweiler and Hinkkanen in~\cite{BH99} and has many strong properties -- for example, it was shown in~\cite{RS05} that all the components of $A(f)$ are unbounded. Note that $A(f)$ is a subset of the set $Z(f)$ defined earlier; see~\cite{BH99}.

The main result of this paper is the following; for completeness, we include the conclusions of Theorem~\ref{RRRSth} in this result.

\begin{theorem}\label{main}
Let $f$ be a finite composition of functions of finite order in the class $\B$ and let $z_0 \in I(f)$.
Then $z_0$ can be connected to $\infty$ by a simple curve $\gamma \subset I(f)$ such that
\begin{itemize}
\item[(a)] $f^n\to \infty$ as $n\to\infty$ uniformly on $\gamma$,
\item[(b)] there exists $k \in \N$ such that $f^k(\gamma)$ is a Devaney hair of $f$,
\item[(c)] $\gamma \setminus \{z_0\} \subset A(f)$.
\end{itemize}
\end{theorem}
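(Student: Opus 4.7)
Parts (a) and (b) are restatements of Theorem~\ref{RRRSth} together with Remark~1, so I would only need to establish (c): $\gamma\setminus\{z_0\}\subset A(f)$. My plan is to work inside the logarithmic-coordinates framework used in the proof of Theorem~\ref{RRRSth}. After conjugation by $\exp$, the map $f$ induces a transform $F:\T\to\H$ defined on a disjoint union $\T$ of logarithmic tracts above a right half-plane $\H$, with each component of $\T$ mapped conformally onto $\H$. By the construction in \cite{RRRS}, the Devaney hair $f^k(\gamma)$ from part~(b) is the exponential image of a curve $\tilde\gamma\subset\T$, arising as a nested intersection of preimages of a horizontal ray in $\H$ under inverse branches of $F$; moreover $\tilde\gamma$ lies in the interior of its tract, at uniformly positive hyperbolic distance from $\partial\T$.

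The key input will be the head-start condition that $F$ satisfies along $\tilde\gamma$ for finite compositions of finite-order functions in $\B$. I would combine the finite-order bound $M(r,f)\leq\exp(r^\rho)$ (with $\rho$ determined by the orders of the component functions) with the Eremenko--Lyubich expansion estimate for $|F'(w)|$ inside a tract, and exploit the interior placement of $\tilde\gamma$, to obtain constants $K>0$ and $x_0>0$ such that
\[
\re F(w)\;\geq\;\exp(K\re w)\qquad\text{for }w\in\tilde\gamma\text{ with }\re w\geq x_0.
\]
This \emph{exponential} head-start in real parts will be the main engine of the argument.

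Given $z\in\gamma\setminus\{z_0\}$ with lift $\tilde z\in\tilde\gamma$, part~(a) ensures $\re F^n(\tilde z)\to\infty$, so for some $N=N(\tilde z)$ the head-start applies for all $n\geq N$. Iterating it makes $\re F^n(\tilde z)$ grow at least as a tower of exponentials of height $n-N$, and hence $|f^n(z)|$ as a tower of height $n-N+1$. Iterating the finite-order bound $M(r,f)\leq\exp(r^\rho)$ yields an upper tower of the same form for $M^n(R,f)$. A finite shift $L=L(z)$ then absorbs the discrepancy between the constants and gives $|f^{n+L}(z)|\geq M^n(R,f)$ for all $n$, so $z\in A(f)$.

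The hardest part will be verifying the exponential head-start on $\tilde\gamma$: the finite-order upper bound $|F(w)|\leq\exp(\rho\re w)$ is immediate, but the matching lower bound requires coupling the tract geometry with the interior placement of $\tilde\gamma$ from the RRRS construction. A further delicate step is matching the base $K$ of the head-start tower with the base $\rho$ of the $M^n$-tower; here I would exploit that $K$ can be replaced by any larger power at the cost of increasing $N$, so a sufficiently long initial orbit segment followed by a bounded shift suffices. The endpoint $z_0$ must be excluded because its orbit need not reach the head-start region.
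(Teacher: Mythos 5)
The central step of your plan --- the pointwise bound $\re F(w)\geq\exp(K\re w)$ for every $w\in\tilde\gamma$ (and, as your iteration requires, for every point of the image hairs $F^n(\tilde\gamma)$) with $\re w\geq x_0$ --- is a genuine gap. The premise you invoke for it, that the hair lies ``at uniformly positive hyperbolic distance from $\partial\T$'', is not part of the construction in \cite{RRRS} and is false in general: the hair is contained in $J(F)\subset\overline{\T}$, and its points, in particular those near the finite endpoint and the orbit points of the endpoint itself, can lie arbitrarily close to $\partial\T$, where $\re F$ is close to $0$. Worse, the bound cannot hold with the uniformity you need: the endpoint $w_0$ lies in $I(F)$, so its orbit eventually has $\re F^n(w_0)\geq x_0$ and stays on the image hairs, and your estimate applied along this orbit would give $\re F^{n+1}(w_0)\geq\exp(K\re F^n(w_0))$, forcing the endpoint (and, by continuity, nearby non-endpoint hair points at those times) to be fast escaping. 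This contradicts the fact, cited in the introduction (Theorem~1.4 of \cite{lR06}), that endpoints of hairs escape at all possible rates, even for $\lambda e^z$. The estimate that is actually available is \emph{relative}, not absolute: using the linear head-start condition together with Lemmas~\ref{5.2} and~\ref{5.4}, the paper's Lemma~\ref{fast} gives $\re F(w)>\exp(\eps\re w)\,\re F(\zeta)$ when $w$ (an orbit point of your chosen non-endpoint point) has a head start over $\zeta$ (the corresponding orbit point of the endpoint); combined with $\re F^m(w_0)>1$ this produces the tower growth of Corollary~\ref{cor} without any pointwise lower bound on $F$ along the hair. The Eremenko--Lyubich estimate (Lemma~\ref{expansion}) bounds $|F'|$ in terms of $\re F$, not $\re F$ in terms of $\re w$, and cannot substitute for this.

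Your final tower comparison also needs repair. The suggestion that ``$K$ can be replaced by any larger power at the cost of increasing $N$'' does not fit the definition of $A(f)$: if two steps of $t\mapsto\exp(\eps t)$ are needed to dominate one step of $M(\cdot,f)$, the deficit in the iteration count grows linearly and cannot be absorbed by a bounded shift $L$. The correct resolution, which is the paper's Lemma~\ref{omegas}, keeps the counts aligned: for $\omega(t)=\exp(\eps t)$ and $\Omega(t)=\exp(\delta t)$ with $\delta>\eps$, one has $\omega^n(R)\geq\Omega^n(r)$ for all $n$ provided the seed $R$ is large enough depending on $r$; one then goes far enough along the orbit that $\re F^m(w_0)\geq R$ and uses $M^n(r,F)\leq\Omega^{np}(r)$ from the finite-order hypothesis. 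So the overall architecture you envisage (pass to $\Blogn$, exponential gain per step, compare towers, absorb by a finite shift) matches the paper's proof of Theorem~\ref{mainlog}, but the per-step estimate must be the relative head-start estimate of Lemma~\ref{fast}, and the base-matching must be achieved by enlarging the seed, not the exponent.
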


Theorem~\ref{main} states that all points on the curves in Theorem~\ref{RRRSth}, except possibly endpoints, are fast escaping.
We can also use Theorem~\ref{main} to show that, for certain functions outside
the class $\B$, the escaping set meets the Julia set in curves all points of which, except possibly endpoints, are fast escaping.

\begin{theorem}\label{Fatou}
Let $f$ be a {\tef} such that
\[ \pi(f(z))=g(\pi(z)),\quad \text{for }z\in\C,\]
where $\pi(z)=e^{az}$, $a\ne 0$, and $g$ is a {\tef} which is a self
map of $\C^*=\C\setminus\{0\}$.
If $g$ is a finite composition of functions of finite order in the class $\B$, $g$ has an attracting fixed point at~$0$, and
$\sing (g^{-1})$ is contained in a compact subset of the immediate
basin of attraction of~$g$ containing~$0$, then
\begin{itemize}
\item[(a)] $J(f)$ consists of a family of disjoint
curves tending to $\infty$, each homeomorphic to $[0,\infty)$;
 \item[(b)] $I(f)\cap J(f)$
consists of the disjoint curves comprising~$J(f)$, omitting some of
their endpoints; each of these curves, except possibly its  endpoint,
is contained in $A(f)$.
\end{itemize}
\end{theorem}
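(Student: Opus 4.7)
The plan is to deduce Theorem~\ref{Fatou} by combining the hair structure for $g$ (from Theorem~\ref{main} and Bara\'nski-type results under the basin condition) with the semiconjugacy $\pi$, lifting the structure from $J(g)$ to $J(f)$. Under the hypotheses on $g$, every point of $J(g)$ lies on a unique simple curve $\gamma:[0,\infty)\to J(g)$ homeomorphic to $[0,\infty)$ with $|\gamma(t)|\to\infty$, and by Theorem~\ref{main} applied to $g$, each such curve, except possibly its finite endpoint, lies in $A(g)\cap I(g)$.

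The first step is to establish $J(f)=\pi^{-1}(J(g))$. For $\pi^{-1}(J(g))\subset J(f)$, the open mapping property of $\pi$ and the commutation $g^n\circ\pi=\pi\circ f^n$ transfer failure of normality from $\pi(z)$ back to $z$. Conversely, if $\pi(z)\in F(g)$ (necessarily non-zero) and a subsequence $g^{n_k}\to\phi$ locally uniformly on a neighbourhood $U$ of $\pi(z)$ bounded away from $0$, then either $\phi\equiv\infty$, forcing $\re(af^{n_k})\to+\infty$ and hence $f^{n_k}\to\infty$ locally uniformly on $\pi^{-1}(U)$, or $\phi:U\to\C^*$ is finite, in which case $g^{n_k}(\pi(z))/\phi(\pi(z))\to 1$ forces $af^{n_k}(z)-\log\phi(\pi(z))$ (in a continuous branch) to converge to a locally constant multiple of $2\pi i$, producing a locally uniform limit for $f^{n_k}$. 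Either way $z\in F(f)$, so $J(f)=\pi^{-1}(J(g))$.

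Next, each curve $\gamma$ in $J(g)$ lifts through the covering $\pi$ to countably many disjoint simple curves $\tilde\gamma:[0,\infty)\to\C$ in $J(f)$, one for each $\pi$-preimage of $\gamma(0)$. Since $\re(a\tilde\gamma(t))=\log|\gamma(t)|\to\infty$, we have $|\tilde\gamma(t)|\to\infty$ and each lift is homeomorphic to $[0,\infty)$ with its other endpoint at $\infty$; together these lifts exhaust $\pi^{-1}(J(g))=J(f)$, proving~(a). For $t>0$, $|g^n(\gamma(t))|\to\infty$ forces $\re(af^n(\tilde\gamma(t)))\to\infty$, hence $|f^n(\tilde\gamma(t))|\to\infty$, so the interior of each lift lies in $I(f)$; the endpoint $\tilde\gamma(0)$ may or may not escape, accounting for the ``omitting some of their endpoints'' clause in~(b).

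The main technical obstacle is upgrading ``escaping'' to ``fast escaping'' on the lifted curves, via the comparison
\[
M(e^{|a|r},g)\;\asymp\;e^{|a|M(r,f)}\qfor r\text{ large}.
\]
The upper direction $M(r,f)\le|a|^{-1}\log M(e^{|a|r},g)+O(1)$ follows from $|f(z)|\le|a|^{-1}(\log|g(\pi(z))|+O(1))$ combined with $|\pi(z)|\le e^{|a|r}$; the lower direction requires that the maximum of $|g|$ on $|w|=e^{|a|r}$ be essentially attained in $\pi(\{|z|=r\})$, which is where the finite-order hypothesis enters, via a $\cos\pi\rho$-type minimum modulus estimate. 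Iterating (with an index shift to absorb multiplicative constants) yields $M^n(e^{|a|R'},g)\ge e^{|a|M^{n-1}(R',f)}$ for $n$ large. Combined with $|g^{n+L}(\gamma(t))|\ge M^n(R,g)$ from $\gamma(t)\in A(g)$ and the lower bound $|f^{n+L}(\tilde\gamma(t))|\ge|a|^{-1}\log|g^{n+L}(\gamma(t))|$, this yields $|f^{n+L}(\tilde\gamma(t))|\ge M^{n-1}(R',f)$ after index adjustments, placing $\tilde\gamma(t)$ in $A(f)$ and completing~(b).
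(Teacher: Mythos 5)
Your overall architecture (structure of $J(g)$ as hairs, $J(f)=\pi^{-1}(J(g))$, lifting the curves, then upgrading to $A(f)$) matches the paper, and the first two steps are essentially fine: the identity $J(f)=\pi^{-1}(J(g))$ is Bergweiler's theorem on self-maps of $\C^*$, which the paper simply cites, and your direct normality argument for it is workable (though note you omit the case $\phi\equiv 0$, which is in fact the principal case here since $F(g)$ is the attracting basin of $0$; it is handled exactly like $\phi\equiv\infty$). The genuine gap is in the last step, where the paper cites Bergweiler--Hinkkanen's semiconjugation theorem $\pi^{-1}(A(g))\subset A(f)$ and you instead try to prove the transfer by hand via the comparison $M(e^{|a|r},g)\asymp e^{|a|M(r,f)}$. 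The direction you actually need for the iteration, $M(r,f)\le |a|^{-1}\log M(e^{|a|r},g)+O(1)$, is derived from the pointwise bound $|f(z)|\le |a|^{-1}(\log|g(\pi(z))|+O(1))$, and that bound is false: the semiconjugacy $e^{af(z)}=g(e^{az})$ gives only $\re (af(z))=\log|g(\pi(z))|$, so it controls one real coordinate of $af(z)$ and says nothing about $\im (af(z))$, which is a branch of $\arg g(e^{az})$ and is in general unbounded on $|z|=r$ (already for $f(z)=z+1+e^{-z}$ the right-hand side is $O(1)$ at points where $|g(\pi(z))|=1$ while $|f(z)|\asymp r$).

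One can repair the single-step inequality by applying Borel--Carath\'eodory to the entire function $af$ (or by writing $f(z)=mz+c+\tilde h(e^{az})$), but this inevitably introduces a multiplicative constant and a change of radius, e.g.\ $|a|M(r,f)\le 2\log M(e^{2|a|r},g)+O(r)$, and your ``index shift to absorb multiplicative constants'' does not deal with this: inside the iterated maximum modulus $M^n$, replacing $M(r,f)$ by $\tfrac12 M(r,f)$ or $r$ by $2r$ is not harmless without a further argument (and $g$ need not have finite order, being a finite composition of finite-order functions, so the $\cos\pi\rho$ minimum-modulus input you invoke for the other direction is unavailable as well as unnecessary). This robustness of the fast-escaping set under the semiconjugacy is precisely what the cited result of Bergweiler and Hinkkanen provides, and the paper's proof consists of invoking it together with \cite[Theorem 5.10]{RRRS} and Theorem~\ref{main}; your proposal either needs that citation or a genuinely complete replacement for it. A smaller point: the statement ``omitting \emph{some} of their endpoints'' is justified in the paper by observing that repelling periodic points lie in $J(f)$ and do not escape, hence must be endpoints; your write-up only remarks that endpoints ``may or may not escape''.
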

{\it Remarks.}\quad 1. Any {\tef} which is a self-map of $\C^*$ is of
the form $g(z)=z^m\exp h(z)$, where $m\in \N$ and $h$ is
non-constant entire. Thus the functions covered by Theorem~\ref{Fatou} are all of the form
$f(z)=mz+h(e^{az})$, where $m\in\N$, $a\ne 0$ and $h$ is non-constant entire.

2. Since $\sing (g^{-1})$ is contained in a compact subset of the immediate basin of attraction of~ $g$ containing~$0$, this immediate basin of attraction is the only Fatou component of~$g$; see~\cite[page~396]{BK07}. Therefore, the Fatou set
of~$f$ is a completely invariant Baker domain that contains $\sing(f^{-1})$.

The plan of the paper is as follows. In Section 2 we describe the background to Theorem~\ref{main} and include a summary of the main steps in the proof of Theorem~\ref{RRRSth} since these ideas are used in our proofs. In Section 3 we give the proof of Theorem~\ref{main}. In Section 4 we prove Theorem~\ref{Fatou} and show that it applies to various families of functions, including the function
$f(z)=z+1+e^{-z}$ studied by Fatou in~\cite{pF26}. In Section 5 we show that the conclusions of Theorem~\ref{main} also hold for many functions of {\it infinite} order in the class $\B$, and in Section~6 we give an example of a Devaney hair which lies in a logarithmic tract of a {\tef} of infinite order and contains no fast escaping points.

\section{Background}
\setcounter{equation}{0}

Theorem~\ref{RRRSth} was proved in~\cite{RRRS} by using logarithmic transforms of functions in the class $\B$. Here we introduce the techniques involved in that proof.

Let $f$ be a function in the class $\B$, let $D$ denote a bounded Jordan domain such that $\sing (f^{-1}) \cup \{0,f(0)\} \subset D$ and let $W = \C \setminus \overline{D}$.  It is known that each component $V$ of $\{z: f(z) \in W\}$ is a {\it logarithmic tract}; that is, $V$ is simply connected and $f:V \to W$ is a universal covering. The main tool used by Eremenko and Lyubich in~\cite{EL} to study such functions was a {\it logarithmic change of variable}. Let $H = \exp^{-1}(W)$. The map $\exp: H \to W$ is also a universal covering and so there exists a biholomorphic map $G:V \to H$ such that $f = \exp \circ G$. For each $w \in \exp^{-1}(V)$ we define the function $F$ by $F(w) = G(e^w)$ so that $\exp F(w) = f(e^w)$. Since we can define $F$ on $\exp^{-1}(V)$ for each  component $V$ of $\{z: f(z) \in W\}$, this defines a map $F : \exp^{-1}(\{z: f(z) \in W\}) \to H$. We say that $F$ is a {\it logarithmic transform} of~$f$.

Following the notation used in~\cite{RRRS} we denote the domain of $F$ by $\mathcal{T}$ and refer to the components of $\mathcal{T}$ as the {\it tracts} of $F$. As stated in~\cite{RRRS}, the function $F: \mathcal{T} \to H$ has the following properties.

\begin{enumerate}
\item $H$ is a $2\pi i$-periodic Jordan domain that contains a right half-plane.

\item Every component of $\mathcal{T}$ is an unbounded Jordan domain with real parts bounded below, but unbounded from above.

\item The components of $\mathcal{T}$ have disjoint closures and accumulate only at infinity; that is, if $z_n \in \mathcal{T}$ is a sequence of points all belonging to different tracts, then $z_n \to \infty$.

\item For every component $T$ of $\mathcal{T}$, $F:T \to H$ is a conformal isomorphism -- in particular, $F$ extends continuously to the closure $\overline{\mathcal{T}}$ of $\mathcal{T}$.

\item For every component $T$ of $\mathcal{T}$, $\exp|_{\mathcal{T}}$ is injective.

\item $\mathcal{T}$ is invariant under translation by $2 \pi i$.
\end{enumerate}

The class of functions $F: \mathcal{T} \to H$, for which $F$, $\mathcal{T}$ and $H$ have these properties, is denoted by $\B_{\log}$.

{\it Remarks.} 1. Functions in the class $\B_{\log}$ are sometimes assumed to be periodic -- see, for example,~\cite{lR08}. In this paper (and also in~\cite{RRRS}) this assumption is not made.

2. Functions in the class $\B_{\log}$ do not have to be logarithmic transforms of functions in the class $\B$. For example, suppose that $f$ is any meromorphic function with a logarithmic tract $V$. Then a function $F \in \B_{\log}$ with domain $\exp^{-1}(V)$ can be defined as above. As before, the function~$F$ is said to be a logarithmic transform of~$f$. The iteration of meromorphic functions with a logarithmic tract is discussed in~\cite{BRS08}. In later sections, we include remarks to indicate how our results can be generalised to meromorphic functions with a logarithmic tract $V$. These generalisations concern the sets
\[
 I(f,V) = \{z \in V \cap I(f) : f^n(z) \in V \mbox{ for all } n \in \N\}
\]
and
\begin{eqnarray*}
A'(f,V) & = & \{z \in V: \mbox{ there exists } L \in \N \mbox{ such that } f^n(z) \in V \\
&& \mbox{ and } f^{n+L}(z) \geq M_V^n(R,f), \mbox{ for all } n \in \N\},
\end{eqnarray*}
where $R$ is sufficiently large and $M_V(r,f) = \max_{z \in V, |z| = R}|f(z)|$. (We use the notation $A'(f,V)$ in order to maintain consistency with~\cite{BRS08} where $A(f,V)$ is used to denote the union of the set $A'(f,V)$ together with all its pre-images.)

The following key property of functions in the class $\B_{\log}$ was proved first by Eremenko and Lyubich~\cite{EL} for logarithmic transforms of functions in the class~$\B$. It enabled them to prove that if $f$ is in the class $\B$ then $I(f) \subset J(f)$.

\begin{lemma}\label{expansion}
Let $F : \mathcal{T} \to H$ be a function in the class $\B_{\log}$ with $H \supset \{w: \re w > R\}$. Then, for each $w \in T$ with $\re F(w) > R$,
\[
  |F'(w)| \geq \frac{1}{4 \pi} (\re F(w) -  R).
\]
\end{lemma}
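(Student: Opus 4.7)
The plan is to apply Koebe's one-quarter theorem to the conformal inverse $g := (F|_T)^{-1}: H \to T$ and then to exploit the injectivity of $\exp|_T$ (property~(5) above) in order to bound $|g'|$ from above.

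First I would fix $w \in T$ with $\re F(w) > R$, write $v = F(w) \in H$, and set $r := \re v - R > 0$. By hypothesis, $H$ contains the right half-plane $\{z : \re z > R\}$, so the open disk of radius $r$ about $v$ lies entirely in $H$. By property~(4), $g$ is univalent on $H$ and in particular on this disk, with $g(v) = w$. Applying Koebe's one-quarter theorem to $g$ on this disk yields
\[
\bigl\{z : |z - w| < \tfrac{1}{4}|g'(v)|\,r\bigr\} \;\subset\; T.
\]
The key geometric observation comes next: since $\exp|_T$ is injective, $T$ cannot contain any open disk of radius strictly greater than $\pi$, for otherwise such a disk would contain two points whose imaginary parts differ by $2\pi$, contradicting injectivity. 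Hence $\tfrac{1}{4}|g'(v)|\,r \leq \pi$, so $|g'(v)| \leq 4\pi/r$, and inverting gives
\[
|F'(w)| \;=\; \frac{1}{|g'(v)|} \;\geq\; \frac{r}{4\pi} \;=\; \frac{\re F(w) - R}{4\pi},
\]
which is the desired estimate.

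The argument is short and I do not expect a real obstacle in the analytic content: the only quantitative input is the standard Koebe one-quarter theorem for univalent functions. The step that requires attention is recognising that the injectivity of $\exp|_T$ is precisely what converts the Koebe lower bound on the size of $g(D(v,r))$ into a derivative bound for $F$; this periodicity-induced ``thinness'' of tracts is the structural feature of the class $\Blog$ that drives the lemma.
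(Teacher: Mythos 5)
Your proof is correct and is precisely the classical Eremenko--Lyubich argument that the paper invokes for this lemma (the paper cites \cite{EL} rather than reproducing the proof): apply the Koebe one-quarter theorem to the inverse branch $(F|_T)^{-1}$ on the disk $D(F(w),\re F(w)-R)\subset H$, and use the injectivity of $\exp|_T$ to see that $T$ contains no disk of radius exceeding $\pi$. All steps, including the constant $1/(4\pi)$, check out.
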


It follows from Lemma~\ref{expansion} that if $F:\T\to H$ is in $\B_{\log}$ then there exists $R_0 > 0$ such that
\begin{equation}\label{exp}
|F'(w)| \geq 2 \mbox{ if } w \in \mathcal{T} \mbox{ and } \re F(w) \geq R_0.
\end{equation}
We say that a function $F: \T \to H$ in the class $\Blog$ is {\it normalised} if $H$ is the right half-plane $\h = \{w: \re w > 0\}$
and~\eqref{exp} holds for all $w \in \mathcal{T}$. We denote the class of all such functions by $\Blogn$.

Note that we can pass from any function $F \in \Blog$ to a normalised one by restricting $F$ to $\mathcal{T}' = F^{-1}(\{w: \re w > R_0\})$, where $R_0$ is as above, and applying the change of variable $\zeta = w - R_0$. For this reason, it is usually no loss of generality to assume that $F \in \Blogn$.

For any function $F\in\B_{\log}$, we define the {\it escaping set} of~$F$ to be
\[
 I(F) = \{w \in \overline{\mathcal{T}} : F^n(w) \in \overline{\mathcal{T}}, \mbox{ for all } n \in \N, \mbox{ and } \re F^n(w) \to \infty \mbox{ as } n \to \infty\}.
\]
This is a subset of the {\it Julia set} of~$F$ defined by
\[
  J(F) = \{w \in \overline{\mathcal{T}}: F^n(w) \in \overline{\mathcal{T}}, \mbox{ for all } n \in \N \}.
\]
We define the {\it fast escaping set} of $F$ by
\begin{eqnarray*}
  A(F)& = & \{w \in \overline{\mathcal{T}}: \mbox{ there exists } L \in \N \mbox{ such that } F^n(w) \in \overline{\mathcal{T}} \\
  &&\mbox{ and } \re F^{n+L}(w) \geq M^n(R,F), \mbox{ for all } n \in \N\},
\end{eqnarray*}
where
\[
 M(r,F) = \max_{\re w = r} \re F(w),
\]
and $R$ is any value so large that $M(r,F)>r$ for $r\ge R$. Note that $M(r,F)$ is an increasing function of $r$ by the maximum principle so, with this definition, $A(F)$ is independent of $R$.

Now suppose that $f\in{\mathcal B}$ and that $F: \exp^{-1}(\{z : f(z) \in W\}) \to H$ is a logarithmic transform of~$f$. If $z \in I(f)$, then there exists $N \in \N$ such that $f^n(z) \in W$, for $n\ge N$, and so $f^N(z) \in \exp I(F)$. Similarly, if $z \in A(f)$, then there exists $N \in \N$ such that $f^N(z) \in \exp A(F)$. On the other hand, if $w$ lies in $I(F)$ or $A(F)$, then $z=\exp w$ lies in $I(f)$ or $A(f)$. Also, if $F \in \Blogn$, then $\exp J(F)\subset J(f)$; this follows from~\eqref{exp} using the same argument as that used by Eremenko and Lyubich in~\cite{EL} to show that $I(f) \subset J(f)$ if $f \in \B$. Similar statements are true if $f$ is a {\tmf} with a logarithmic tract~$V$, and $F\in{\mathcal B}_{\log}$ (respectively $\Blogn$) with domain $\exp^{-1}(V)$ is a logarithmic transform of~$f$.

If $w \in J(F)$ then, for each $n \geq 0$, $F^n(w) \in \overline{T_n}$ for some tract $T_n$. We say that $\underline{s} = T_0T_1T_2\ldots$ is the {\it (external) address} (or {\it itinerary}) of $w$. The {\it shift operator} $\sigma$ acts on an external address as follows: if $\underline{s} = T_0T_1T_2\ldots$, then
$\sigma(\underline{s}) = T_1T_2\ldots$. Note that if a point $w$ has address $\underline{s}$ then $F^k(w)$ has address $\sigma^k(\underline{s})$. We denote the set of escaping points of~$F$ with the same  address by
\[
I_{\underline{s}}(F) = \{w \in I(F): w \mbox{ has address } \underline{s}\}.
\]
Similarly, we define
\[
J_{\underline{s}}(F) = \{w: F^n(w) \in \overline{\mathcal{T}}, \mbox{ for all } n \geq 0, \mbox{ and }
w \mbox{ has address } \underline{s}\}.
\]

We now outline the key steps in the proof of Theorem~\ref{RRRSth}, given in~\cite{RRRS}. A key concept in the proof of Theorem~\ref{RRRSth} is the idea of a linear head-start condition.

\begin{definition}
Let $F \in \B_{\log}$. Then $F$ satisfies a {\it linear head-start condition} with constants $K>1$ and $M>0$
if,
\begin{enumerate}
\item for all tracts $T$, $T'$ of $F$ and all $w, \zeta \in \overline{T}$ with $F(w), F(\zeta) \in \overline{T'}$,
     \[
     \re w > K(\re \zeta)^+ + M \Rightarrow \re F(w) > K (\re F(\zeta))^+ + M;
     \]

\item     for each address $\underline{s}$ and each pair of distinct points $w, \zeta \in J_{\underline{s}}(F)$, there exists $N \in \N$ such that either
    \[\re F^N(w) > K (\re F^N(\zeta))^+ + M \; \mbox{ or } \; \re F^N(\zeta) > K (\re F^N(w))^+ + M.
    \]
\end{enumerate}
\end{definition}
{\it Remark.} We use $t^+$ to denote $\max\{t, 0\}$.

 When $F\in \B_{\log}$ satisfies this linear head-start condition, we can define a simple ordering on points in $J_{\underline{s}}(F)$ -- we say that $w_1 \succ w_2$ if there exists $N \in \N$ such that $\re F^N(w_1) > K (\re F^N(w_2))^+ + M$. By using topological arguments, it is shown in~\cite{RRRS} that all the components of $J_{\underline{s}}(F)$ are curves for which the topological ordering agrees with the ordering given by the linear head-start condition. The following result is proved in~\cite[proof of Theorem 4.2]{RRRS}.

\begin{lemma}\label{RRRSle}
Let $F \in \B_{\log}$ and suppose that $F$ satisfies a linear head-start condition. If $w_0 \in I_{\underline{s}}(F)$, then there exists $k_0 \in \N$ such that, for all $k \geq k_0$, $\Gamma_k = \{w \in I_{\sigma^k(\underline{s})}(F): w \succeq F^k(w_0)\}$ is a simple curve connecting $F^k(w_0)$ to $\infty$. Also, $\re F^n(w) \to \infty$ uniformly on $\Gamma_k$.
\end{lemma}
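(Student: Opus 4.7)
The plan is to leverage the topological structure of $J_{\underline{s}}(F)$ established earlier in \cite{RRRS}: under a linear head-start condition, each connected component of $J_{\underline{s}}(F)$ is a simple arc along which the order $\succeq$ coincides with a natural linear topological order. Let $C_k$ denote the component of $J_{\sigma^k(\underline{s})}(F)$ containing $F^k(w_0)$, so $C_k$ is a simple arc; its upper tail $C_k^+ := \{w \in C_k : w \succeq F^k(w_0)\}$ is therefore a closed sub-ray.

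The first step is to identify $\Gamma_k$ with $C_k^+$ for all sufficiently large $k$. The inclusion $\Gamma_k \subseteq C_k^+$ is straightforward: any $w \in I_{\sigma^k(\underline{s})}(F) \subseteq J_{\sigma^k(\underline{s})}(F)$ comparable to $F^k(w_0)$ under $\succeq$ must lie in the same component $C_k$, since part~(2) of the head-start condition forces any two distinct points in $J_{\sigma^k(\underline{s})}(F)$ on different components to be \emph{in}comparable. For the reverse inclusion one verifies that every $w \in C_k^+$ escapes: for $w \succ F^k(w_0)$ there is some $N$ with $\re F^N(w) > K(\re F^N(F^k(w_0)))^+ + M$, and iterating part~(1) of the head-start condition gives $\re F^{N+j}(w) > K(\re F^{N+j}(F^k(w_0)))^+ + M$ for all $j \ge 0$. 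Since $\re F^{k+N+j}(w_0) \to \infty$ by hypothesis on $w_0$, this forces $\re F^{N+j}(w) \to \infty$, so $w \in I_{\sigma^k(\underline{s})}(F)$. Unboundedness of $\Gamma_k$ as a topological ray to infinity is then inherited directly from $C_k^+$.

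The main obstacle is the \emph{uniform} escape statement $\re F^n \to \infty$ on $\Gamma_k$. The argument above only produces, for each $w$, an iterate $N(w)$ beyond which $\re F^n(w)$ dominates $\re F^n(F^k(w_0)) + M$, and a priori $N(w)$ could blow up as $w \to F^k(w_0)$ along $\Gamma_k$. To handle this, the plan is to choose $k_0$ so large that $\re F^{k_0}(w_0)$ sits well above the constant $R_0$ from \eqref{exp}, ensuring that every relevant orbit segment of the endpoint lies in the expanding region of Lemma~\ref{expansion}. One then exploits the uniform expansion $|F'|\ge 2$ together with the conformal bijection $F|_T : T \to H$ on each tract to argue that a sub-arc of $\Gamma_k$ whose $F^n$-image stayed bounded in real part would have to shrink geometrically in hyperbolic size under iteration, while the endpoint $F^{k+n}(w_0)$ of the connected tail $F^n(\Gamma_k)=\Gamma_{k+n}$ retreats to infinity --- a contradiction. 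This quantitative uniformisation step, rather than the order-theoretic bookkeeping, is the technical heart of the proof.
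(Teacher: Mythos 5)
The paper does not actually prove Lemma~\ref{RRRSle}; it quotes it from \cite[proof of Theorem 4.2]{RRRS}, so your argument has to stand on its own, and it has a genuine gap precisely at the step you dismiss as straightforward. You justify the inclusion $\Gamma_k\subseteq C_k^+$ by asserting that part~(2) of the linear head-start condition forces two points of $J_{\sigma^k(\underline{s})}(F)$ lying on different components to be incomparable. That is exactly backwards: part~(2) says that \emph{any} two distinct points with the same address are comparable, regardless of which component they lie in, so comparability of $w$ with $F^k(w_0)$ carries no information about $w$ belonging to $C_k$. But the assertion that every escaping point of address $\sigma^k(\underline{s})$ with $w\succeq F^k(w_0)$ lies on the one arc through $F^k(w_0)$ is the real content of the lemma -- it is what turns the purely order-theoretically defined set $\Gamma_k$ into a curve -- and it does not follow from the background statement that each component of $J_{\underline{s}}(F)$ is an arc on which the topological and speed orderings agree: two disjoint arcs could perfectly well interleave in the speed ordering. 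Ruling this out (via the relation between the order topology of $\succeq$ and the planar topology, and the choice of $k_0$) is where the topological work in \cite{RRRS} is actually done, and your proposal assumes it rather than proves it. The reverse inclusion $C_k^+\subseteq\Gamma_k$, by propagating the head start with part~(1), is fine.

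The uniformity step also misplaces the difficulty, and the proposed fix is not carried out: the claimed geometric shrinking is not justified (in which domain's hyperbolic metric? Lemma~\ref{expansion} gives expansion only at points whose \emph{image} has large real part, and for general $F\in\Blog$ the tracts need not lie inside $H$, so no Schwarz-type contraction is available), and it relies on $F^n(\Gamma_k)=\Gamma_{k+n}$, whose surjectivity you never check. In fact no geometry is needed. If $w\succ\zeta$ and the two points have the same address, then $\zeta$ can never hold the head start over $w$ at any time: a head start propagates forward by part~(1), and two simultaneously propagated head starts give $\re F^n(w)>K(\re F^n(\zeta))^++M$ and $\re F^n(\zeta)>K(\re F^n(w))^++M$ at a common time, which is impossible. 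Hence $\re F^n(\zeta)\le K(\re F^n(w))^++M$ for \emph{all} $n\ge 0$, and taking $\zeta=F^k(w_0)$ gives $(\re F^n(w))^+\ge \bigl(\re F^{n+k}(w_0)-M\bigr)/K$ for every $w\in\Gamma_k$ and every $n$; since the right-hand side is independent of $w$ and tends to infinity, $\re F^n\to\infty$ uniformly on $\Gamma_k$. So the uniform escape is elementary order bookkeeping, while the genuinely technical point is the arc structure of $\Gamma_k$ that you took for granted.
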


In~\cite[Proposition 5.4]{RRRS} it is shown that a function $F \in \Blogn$ satisfies a linear head-start condition, for some $K>1$ and $M>0$,  whenever all the tracts of $F$ have uniformly bounded slope and uniformly bounded wiggling. These concepts are defined as follows.

\begin{definition}\label{slope}
Let $F \in \B_{\log}$. A tract $T$ of $F$ has {\it bounded slope} with constants $\alpha, \beta > 0$ if
\[
|\im w_1 - \im w_2| \leq \alpha \max \{\re w_1, \re w_2,0 \} + \beta, \quad \mbox{ for all } w_1, w_2 \in T.
\]
If all the tracts of $F$ have bounded slope for these constants, then we say that the tracts have {\it uniformly bounded slope} and $F \in \B_{\log}(\alpha, \beta)$. If $F$ is normalised then we say that $F \in \Blogn(\alpha, \beta)$.
\end{definition}

\begin{definition}
Let $F \in \B_{\log}$. A tract $T$ of $F$ has {\it bounded wiggling} with constants $K' > 1$ and $\mu > 0$ if, for each point $w_0 \in \overline{T}$, every point~$w$ on the hyperbolic geodesic of $T$ that connects $w_0$ to $\infty$ satisfies
\[
 (\re w)^+ > \frac{1}{K'} \re w_0 - \mu.
\]
If all the tracts of $F$ have bounded wiggling for these constants, then we say that the tracts have {\it uniformly bounded wiggling}.
\end{definition}

It is shown in~\cite[Theorem 5.6]{RRRS} that if $F$ is a function of finite order in the class~$\Blogn$ then the tracts of $F$ have uniformly bounded slope and uniformly bounded wiggling, and so $F$ satisfies a linear head-start condition. It is further shown in~\cite[Lemma 5.7]{RRRS} that any finite composition of functions in $\Blogn$ whose tracts have uniformly bounded slope and uniformly bounded wiggling also satisfies a linear head-start condition.

A function $F \in \B_{\log}$ is said to have {\it finite order} if
\[
 \ln \re F(w) = O(\re w) \quad \mbox{ as } \re w \to \infty \mbox{ in } \mathcal{T}.
\]
So, if $f \in \B$ has finite order and $F$ is a logarithmic transform of $f$, then $F$ also has finite order. Thus, for each function $f$ satisfying the hypotheses of Theorem~\ref{RRRSth}, there is a logarithmic transform $F$ of $f$ satisfying a linear head-start condition.

Now suppose that $f$ is any function in the class $\B$ such that a logarithmic transform $F$ of $f$ satisfies a linear head-start condition. Recall that if $z_0 \in I(f)$ then there exists $N \in \N$ such that $f^{N}(z_0) \in \exp I(F)$. So, by Lemma~\ref{RRRSle}, there exists $k \in \N$ such that $f^{N+k}(z_0)$ is connected to $\infty$ by a curve $\exp \Gamma$ in $I(f)$ on which $f^n(z) \to \infty$ uniformly. Thus $z_0$ is connected to $\infty$ by a curve $\gamma$  in $I(f)$ on which $f^n(z) \to \infty$ uniformly. This proves Theorem~\ref{RRRSth}. Further, it shows that the conclusions of Theorem~\ref{RRRSth} hold whenever $f$ is a function in the class $\B$ for which a logarithmic transform of $f$ satisfies a linear head-start condition.

We end this section with two results about the class $\Blogn$ that will be useful in the proof of Theorem~\ref{main}. Note that both of these results use the fact that, for a function $F \in \Blogn$, if $w$ belongs to a tract of $F$, then $\re F(w) > 0$. They also both include a condition to the effect that there exist $\alpha, \beta > 0$ such that, if $T$ is a tract of $F$ and  $w, \zeta \in \overline {T}$ then
\[
|\im F(w) - \im F(\zeta)| \leq \alpha \max\{\re F(w), \re F(\zeta)\} + \beta.
\]
Note that this condition is automatically satisfied if $F(w)$ and $F(\zeta)$ both lie in a tract  with bounded slope with constants $\alpha, \beta > 0$.

The first result is part of~\cite[Lemma 5.2]{RRRS}.

\begin{lemma}\label{5.2}
Let $F \in \Blogn$ and let $\alpha, \beta > 0$. Let $T$ be a tract of $F$ and suppose that $w, \zeta \in \overline{T}$ satisfy
 \[
  \re F(w) \geq \re F(\zeta) \; \mbox{ and } \; |\im F(w) - \im F(\zeta)| \leq \alpha \re F(w) + \beta.
 \]

 \begin{itemize}
\item[(a)]
   There exists $\delta = \delta(\alpha, \beta)$ so large that, if $|w-\zeta|\geq \delta$, then
   \[
   \re F(w) > \exp( \tfrac{1}{16 \pi}|w-\zeta|)\re F(\zeta).
   \]

\item[(b)]
 If $K\geq 1$, then there exists $\Delta = \Delta(\alpha, \beta,K)$ so large that, if $w, \zeta \in J(F)$ have the same address and $|w - \zeta| \geq \Delta$, then, for all $n \in \N$,
\[
 \re F^n(w) > K \re F^n(\zeta) + |w-\zeta| \; \mbox{ or } \; \re F^n(\zeta) > K \re F^n(w) + |w-\zeta|.
\]
\end{itemize}
\end{lemma}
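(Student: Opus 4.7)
The plan is to exploit two facts about a normalised $F \in \Blogn$: first, that for each tract $T$ the map $F: T \to H$ is a conformal isomorphism, hence a hyperbolic isometry; and second, that the hyperbolic density on $T$ is bounded below by $1/(4\pi)$, which follows from Lemma~\ref{expansion}.

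For part~(a), set $u = F(w)$ and $v = F(\zeta)$. Since $|F'| \geq 2$ everywhere on $\T$ and $H = \h$ is convex, any two points $w, \zeta \in \overline{T}$ satisfy $|w-\zeta| \leq \tfrac{1}{2}|u-v|$; combined with the elementary bound $|u - v| \leq (1+\alpha)\re u + \beta$ coming from the slope hypothesis, this forces $\re u \geq 1$ once $\delta$ is taken larger than $(1+\alpha+\beta)/2$. I would then bound $d_H(u, v)$ from above by integrating the density $|d\omega|/\re \omega$ on $H = \h$ along the L-shaped path from $v$ horizontally to $\re u + i\im v$ and then vertically to $u$: the horizontal leg contributes $\log(\re u/\re v)$ and the vertical leg contributes $|\im u - \im v|/\re u \leq \alpha + \beta$. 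For the matching lower bound, Lemma~\ref{expansion} gives $|F'(w)| \geq \re F(w)/(4\pi)$, so the hyperbolic density on $T$ is $\rho_T(w) = |F'(w)|/\re F(w) \geq 1/(4\pi)$; integrating along the hyperbolic geodesic in $T$ yields $d_T(w,\zeta) \geq |w-\zeta|/(4\pi)$. Since $F$ is a hyperbolic isometry, $d_H(u,v) = d_T(w,\zeta)$, and combining the two bounds gives
\[
  \frac{|w-\zeta|}{4\pi} \;\leq\; \log\frac{\re u}{\re v} + \alpha + \beta.
\]
For $\delta$ chosen large enough that $\alpha+\beta \leq 3|w-\zeta|/(16\pi)$ whenever $|w-\zeta|\geq\delta$, this yields $\log(\re u/\re v) \geq |w-\zeta|/(16\pi)$, proving~(a).

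For part~(b), I would apply~(a) iteratively along the common orbit. Writing $u_n = F^n(w)$ and $v_n = F^n(\zeta)$, and assuming without loss of generality that $\re u_1 \geq \re v_1$, part~(a) gives $\re u_1 > e^{|w-\zeta|/(16\pi)}\re v_1$; for $\Delta$ sufficiently large this prefactor dwarfs any preassigned multiple of $K$. The same-address hypothesis keeps $u_n$ and $v_n$ in a common tract at each step, and the blanket slope hypothesis on images in the statement of the lemma ensures that~(a) continues to apply to the pair $(u_n, v_n)$, so the multiplicative gap between $\re u_n$ and $\re v_n$ propagates through the iteration.

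The main obstacle is converting this multiplicative head-start into the additive statement $\re u_n > K\re v_n + |w-\zeta|$ for every $n\in\N$. The additive term $|w-\zeta|$ may be arbitrarily large, while Lemma~\ref{expansion} gives no absolute lower bound on $\re F$ in the normalised setting, so~(a) alone does not immediately provide an additive gap. To handle this I would, at each step, combine the ratio bound from~(a) with the elementary observation that once $\re u_n \geq 2\re v_n$ one has $|u_n - v_n| \geq \tfrac{1}{2}\re u_n$, so that~(a) may be re-applied at the next iterate to produce an ever-widening gap; $\Delta = \Delta(\alpha,\beta,K)$ would then be chosen large enough that the first application of~(a) yields a ratio so enormous that the additive slack $|w-\zeta|$ is absorbed throughout all subsequent iterates.
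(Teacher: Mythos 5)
Your part~(a) is essentially correct, and it is essentially the argument in \cite{RRRS} (the present paper does not reprove this lemma, it simply quotes it from there): the lower bound $d_T(w,\zeta)\ge |w-\zeta|/(4\pi)$ via the expansion estimate, the isometry $d_T(w,\zeta)=d_{\h}(F(w),F(\zeta))$, and the upper bound for the half-plane distance along the L-shaped path, with $\delta$ chosen so that $\re F(w)\ge 1$ and so that the $\alpha+\beta$ term is absorbed, all go through.

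Part~(b), however, has a genuine gap, and it sits exactly at the point you flag. A huge ratio $\re F^n(w)/\re F^n(\zeta)$ only gives $\re F^n(w)-K\re F^n(\zeta)\ge (1-Ke^{-s})\re F^n(w)$; this exceeds the additive term $|w-\zeta|$ only if one also knows that $\re F^n(w)$ itself is at least comparable to $|w-\zeta|$, and nothing in your sketch provides such a lower bound: a priori both real parts could be of order $1$ while their ratio is enormous, in which case neither head-start inequality follows. Your auxiliary observation that $\re u_n\ge 2\re v_n$ forces $|u_n-v_n|\ge \tfrac12 \re u_n$ points the wrong way: it bounds the separation from below by the real part, whereas what is needed is a bound on the real part from below by the (growing) separation. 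The missing ingredient is the Euclidean expansion of orbits: since $|F'|\ge 2$ on $\T$ and $\h$ is convex, the inverse branch onto the common tract at each step is a $\tfrac12$-contraction, so the same-address hypothesis gives $|F^n(w)-F^n(\zeta)|\ge 2^n|w-\zeta|$; combined with the imaginary-part control at step $n$ this forces $(1+\alpha)\max\{\re F^n(w),\re F^n(\zeta)\}+\beta\ge 2^n|w-\zeta|$, and only with this absolute lower bound does the ratio estimate from~(a) (applied to the pair $F^{n-1}(w),F^{n-1}(\zeta)$, whose separation is at least $2^{n-1}\Delta\ge\delta$) convert into the additive inequality for a suitable $\Delta(\alpha,\beta,K)$. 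Two further points: the hypothesis in the lemma controls only $\im F(w)-\im F(\zeta)$, so your claim that this ``blanket'' hypothesis lets you apply~(a) to every pair $(F^n(w),F^n(\zeta))$ is unjustified -- what is actually used, both in \cite{RRRS} and in the way the present paper applies the lemma, is that the condition holds at every step, e.g.\ because the tracts have uniformly bounded slope with constants $\alpha,\beta$; and since the point with the larger real part may switch from one iterate to the next, each $n$ must be treated separately (this is the role of the ``or'' in the conclusion), rather than by propagating a single gap forward as your sketch suggests.
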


The next result follows from~\cite[Proposition 5.4, proof that (b) implies (c)]{RRRS}.

\begin{lemma}\label{5.4}
Let $F \in \Blogn$, let $T$ be a tract of $F$ with bounded wiggling, with constants $K'>1$ and $\mu > 0$, let $K >1$, let $\alpha, \beta > 0$ and let $\delta(\alpha, \beta)$ be the constant given in Lemma~\ref{5.2}. There exists a constant $M > \delta(\alpha,\beta)$ depending only on the constants $K', \mu , \alpha, \beta, K$ such that, if $w, \zeta \in \overline{T}$ with
\[
\re w > K(\re \zeta)^+ + M
 \]
 and
 \[ |\im F(w) - \im F(\zeta)| \leq \alpha \max\{\re F(w), \re F(\zeta)\} + \beta,
\]
then
\[
 \re F(w) > K \re F(\zeta) + M.
\]
\end{lemma}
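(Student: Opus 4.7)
The proof combines the exponential gap estimate from Lemma~\ref{5.2}(a) with the geometric rigidity provided by bounded wiggling. First note that the hypothesis $\re w > K(\re\zeta)^+ + M$ forces $|w-\zeta| \geq \re w - \re\zeta > M$; choosing $M > \delta(\alpha,\beta)$ ensures that $|w-\zeta| > \delta(\alpha, \beta)$, so Lemma~\ref{5.2}(a) is applicable to the pair $(w,\zeta)$ once we verify that $\re F(w) \geq \re F(\zeta)$.

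The crux is to establish $\re F(w) \geq \re F(\zeta)$. I would argue by contradiction: suppose $\re F(w) < \re F(\zeta)$. The hyperbolic geodesic $\gamma$ from $w$ to $\infty$ in $\overline{T}$ is mapped by $F$ conformally onto the horizontal ray $\{F(w) + r : r \geq 0\} \subset \H$, so there is a unique $w' \in \gamma$ with $F(w') = \re F(\zeta) + i\im F(w)$; in particular $\re F(w') = \re F(\zeta)$. Bounded wiggling yields $\re w' > \re w/K' - \mu$, which is large when $\re w$ is, forcing $|w' - \zeta| > \delta(\alpha, \beta)$ provided $M$ is chosen sufficiently large in terms of $K$, $K'$, $\mu$. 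Since the imaginary-part hypothesis transfers to the pair $(w', \zeta)$ via $|\im F(w') - \im F(\zeta)| = |\im F(w) - \im F(\zeta)| \leq \alpha \re F(w') + \beta$, Lemma~\ref{5.2}(a) applied to $(w', \zeta)$ yields $\re F(w') > \re F(\zeta)$, contradicting the equality $\re F(w') = \re F(\zeta)$.

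Having shown $\re F(w) \geq \re F(\zeta)$, we apply Lemma~\ref{5.2}(a) to $(w, \zeta)$ directly to obtain $\re F(w) > \exp(|w-\zeta|/(16\pi))\re F(\zeta) \geq \exp(M/(16\pi))\re F(\zeta)$. A final calculation, splitting on whether $\re F(\zeta)$ is bounded below by a positive constant or not, upgrades this exponential comparison to the additive conclusion $\re F(w) > K \re F(\zeta) + M$. In the small $\re F(\zeta)$ regime, one uses instead the expansion estimate $|F(w) - F(\zeta)| \geq 2|w-\zeta|$ (from $|F'| \geq 2$) together with the imaginary-part constraint to extract a linear-in-$M$ lower bound on $\re F(w)$ directly.

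The main obstacle is the contradiction argument in the second paragraph: it is precisely there that bounded wiggling (a shape constraint on the tract) must be fused with the expansion estimate of Lemma~\ref{5.2}(a), and the interplay between the head-start constant $K$ and the wiggling constant $K'$ determines how large $M$ must ultimately be chosen. Once monotonicity of real parts is secured, the concluding additive estimate is essentially a matter of computation.
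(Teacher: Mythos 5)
Your overall route (pull back the horizontal ray $\{F(w)+r: r\ge 0\}$, locate the point $w'$ on the geodesic of $T$ from $w$ to $\infty$ with $\re F(w')=\re F(\zeta)$, invoke bounded wiggling, then apply Lemma~\ref{5.2}(a) to the pair $(w',\zeta)$) is the natural one here (the paper itself gives no proof, deferring to the proof of Proposition~5.4 in~\cite{RRRS}), but your crux step contains a genuine quantitative gap. You claim that $\re w' > \re w/K' - \mu$ "forces $|w'-\zeta| > \delta(\alpha,\beta)$ provided $M$ is chosen sufficiently large in terms of $K, K', \mu$". The hypothesis only gives $(\re\zeta)^+ < (\re w - M)/K$, so all you can conclude is
$\re w' - \re \zeta > \re w\,(1/K' - 1/K) + M/K - \mu$.
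If $K \ge K'$ the first term is nonnegative and your choice of $M$ (say $M > K(\delta+\mu)$) works; but for $1<K<K'$ that term is negative and tends to $-\infty$ as $\re w \to\infty$, so no choice of $M$ depending only on $K,K',\mu,\alpha,\beta$ yields $|w'-\zeta|\ge\delta$, and Lemma~\ref{5.2}(a) cannot be applied to $(w',\zeta)$. Since the lemma is stated for an arbitrary $K>1$ while $K'$ is fixed by the tract, this is exactly the regime you must address, and it is the heart of the matter rather than a technicality: a tract that runs out to real part $R$, doubles back to real part about $R/K'$ (which bounded wiggling with constant $K'$ permits), and then heads to $\infty$ contains points $w,\zeta$ on the central geodesic with $\re w \approx K'\re\zeta$, $\im F(w)=\im F(\zeta)=0$ and $\re F(\zeta) > \re F(w)$; so the relation between $K$ and $K'$ must enter the argument explicitly, and "choose $M$ large" cannot substitute for it. You flag the interplay of $K$ and $K'$ as the main obstacle, but your proof never actually resolves it.

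There is also a smaller unclosed step at the end. In the regime where $\re F(\zeta)$ is small, the expansion estimate $|F(w)-F(\zeta)|\ge 2|w-\zeta|$ combined with the imaginary-part hypothesis gives only $\re F(w) \ge \bigl(2|w-\zeta| - \beta\bigr)/(1+\alpha)$, which is linear in $M$ with slope $2/(1+\alpha)$; when $\alpha\ge 1$ this does not exceed $K\re F(\zeta)+M$, so the "linear-in-$M$ lower bound" you describe does not by itself deliver the additive conclusion, and further input (again geometric) is needed there. In summary: for $K\ge K'$ your scheme can be completed, but as written the proof has a real gap precisely where the wiggling constant must be played off against the head-start constant, which is the content the citation to \cite{RRRS} is standing in for.
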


\section{Proof of Theorem~\ref{main}}
\setcounter{equation}{0}

We deduce Theorem~\ref{main} from a result concerning functions in the class $\Blogn$. Here we define a {\it Devaney hair} of a function $F\in \B_{\log}$ to be a simple curve $\Gamma:[0,\infty) \to J(F)$ such that $\Gamma((0,\infty))\subset I(F)$ and
\begin{itemize}
\item[(a)] for each $t>0$, $\re F^n \to \infty$ as $n\to\infty$ uniformly on $\Gamma([t,\infty))$;
\item[(b)] for each $n\in\N$, $F^n(\Gamma)$ is a simple curve connecting $F^n(\Gamma(0))$ to $\infty$.
\end{itemize}

Parts (a) and (b) of the following theorem follow from the results in \cite{RRRS} that we described in Section 2; we include them here for completeness.

\begin{theorem}\label{mainlog}
Let $F$ be a finite composition of functions of finite order in the class $\Blogn$ and let $w_0' \in I(F)$. Then there exists $k \in \N$ such that $w_0 = F^k(w_0')$ can be connected to $\infty$ by a curve $\Gamma \subset I(F)$ such that
\begin{itemize}
\item[(a)] $\re F^n\to \infty$ as $n\to\infty$ uniformly on $\Gamma$,
\item[(b)] $\Gamma$ is a Devaney hair of $F$,
\item[(c)] $\Gamma \setminus \{w_0\} \subset A(F)$.
\end{itemize}
\end{theorem}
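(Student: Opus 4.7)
The plan is to derive parts (a) and (b) immediately from the machinery of Section~2, and to establish the genuinely new content (c) by combining the linear head-start ordering with the exponential expansion estimate of Lemma~\ref{5.2}(a). Because $F$ is a finite composition of finite-order functions in $\Blogn$, \cite[Theorem~5.6 and Lemma~5.7]{RRRS} provide constants $K>1$, $M>0$ such that $F$ satisfies a linear head-start condition, and constants $\alpha,\beta>0$ such that every tract of $F$ has bounded slope with these constants. Writing $\underline{s}$ for the address of $w_0'$ and setting $w_0 := F^k(w_0')$, Lemma~\ref{RRRSle} furnishes an integer $k$ such that $\Gamma := \{w \in I_{\sigma^k(\underline{s})}(F) : w \succeq w_0\}$ is a simple curve joining $w_0$ to $\infty$ on which $\re F^n \to \infty$ uniformly. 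Because $F$ restricts to a conformal isomorphism on each tract, each image $F^n(\Gamma)$ is a simple curve from $F^n(w_0)$ to $\infty$, so $\Gamma$ is a Devaney hair of $F$; this disposes of (a) and (b).

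For (c), fix $w \in \Gamma \setminus \{w_0\}$, parameterise $\Gamma$ so that $\Gamma(0) = w_0$ and $\Gamma(t_w) = w$ for some $t_w > 0$, and set $\zeta := \Gamma(t_w/2)$. Then $w \succ \zeta \succ w_0$ in the head-start ordering, and since all three points share the address $\sigma^k(\underline{s})$, iterating the head-start condition produces $N_0 \in \N$ with
\[
\re F^n(w) \geq K\, \re F^n(\zeta) + M \qquad \text{for all } n \geq N_0.
\]
Rearranging gives $\re F^n(w) - \re F^n(\zeta) \geq \tfrac{K-1}{K}\re F^n(w)$, so $|F^n(w) - F^n(\zeta)| \geq \tfrac{K-1}{K}\re F^n(w) \to \infty$. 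Since $F^n(w)$ and $F^n(\zeta)$ lie in a common tract with bounded slope, the required imaginary-part estimate from Lemma~\ref{5.2}(a) holds for $F^{n+1}(w), F^{n+1}(\zeta)$, and using $\re F^{n+1}(\zeta) \geq 1$ eventually we obtain
\[
\re F^{n+1}(w) \;\geq\; \exp\!\left(\tfrac{1}{16\pi}|F^n(w)-F^n(\zeta)|\right)\re F^{n+1}(\zeta) \;\geq\; \exp\!\left(c\,\re F^n(w)\right),
\]
where $c := (K-1)/(16\pi K) > 0$. This inequality, valid for all $n$ beyond some $N_1$, is the key estimate: the orbit of $w$ grows at least like an iterated exponential of a fixed multiplier.

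To conclude $w \in A(F)$, I would compare this tower-type growth to $M^n(R,F)$. Finite order of $F$ provides $M(r,F) \leq e^{Dr}$ for some $D>0$ and all sufficiently large $r$. Setting $\lambda := 2D/c$ and choosing $R$ large enough that $DR \geq \log \lambda$, and then enlarging $N_1$ so that $\re F^{N_1}(w) \geq \lambda R$ (possible since $w \in I(F)$), a direct induction using the key estimate yields $\re F^{N_1 + n}(w) \geq \lambda M^n(R,F) \geq M^n(R,F)$ for every $n \geq 0$; hence $w \in A(F)$ with $L := N_1$. The main obstacle I expect is bookkeeping: one must verify at each iterate that the hypotheses of Lemma~\ref{5.2}(a) remain satisfied — which relies crucially on the uniform bounded-slope and bounded-wiggling properties of the tracts of $F$ furnished by \cite[Lemma~5.7]{RRRS} — and one must calibrate the multiplicative buffer $\lambda$ and the starting iterate $N_1$ so that the induction survives the gap between the expansion constant $c$ and the order constant $D$.
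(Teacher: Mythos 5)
Your derivation of (a) and (b), and your expansion estimate $\re F^{n+1}(w) \geq \exp(c\,\re F^n(w))$ for $n$ large (which is essentially Lemma~\ref{fast} applied along whole iterates of $F$, using the head-start ordering, bounded slope of the tracts of $F$, and Lemma~\ref{5.2}(a)), are sound. The genuine gap is in the final comparison: you assert that ``finite order of $F$ provides $M(r,F)\leq e^{Dr}$''. But the hypothesis is only that $F=F_p\circ\cdots\circ F_1$ with each \emph{factor} of finite order; the composition itself is in general of infinite order (already for $p=2$: if $\ln \re F_i(w)=O(\re w)$, then $\re F(w)$ can be of the size of an iterated exponential of $\re w$), so $M(r,F)\leq e^{Dr}$ fails and your induction $\re F^{N_1+n}(w)\geq\lambda M^n(R,F)$ does not close. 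Quantitatively, $M^n(R,F)$ grows like a tower of exponentials of height $np$, whereas your key estimate gains only one exponential per iterate of $F$, i.e.\ a tower of height $n$; for $p\geq 2$ the former always outstrips the latter, so membership in $A(F)$ cannot be concluded this way. As written, your argument proves the theorem only in the case $p=1$. (A smaller inaccuracy: the bounded wiggling you cite is a property of the tracts of the individual $F_i$, not of the tracts of $F$ -- bounded slope passes to the composition's tracts, bounded wiggling does not; what Lemma 5.7 of \cite{RRRS} gives for $F$ itself is the head-start condition.)

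The paper closes exactly this gap by applying the exponential expansion once per \emph{factor} rather than once per iterate of $F$. In Corollary~\ref{cor} the head start is propagated through each intermediate composition $F_i\circ\cdots\circ F_1\circ F^{n+m}$ using Lemma~\ref{5.4} together with the bounded slope and bounded wiggling of the tracts of the individual $F_i$, after choosing $k$ so large that each intermediate image of $\Gamma$ lies in a single tract of the next factor (the curves in~\eqref{curves}); Lemma~\ref{fast} is then applied at every one of these $p$ sub-steps, yielding $\re F^{n+m}(w_1)>\omega^{pn}(\re F^m(w_0))$ with $\omega(t)=e^{\eps t}$, a tower of height $pn$. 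This matches the bound $M^n(r,F)\leq\Omega^{pn}(r)$, $\Omega(r)=e^{\delta r}$, coming from the finite order of the factors, and Lemma~\ref{omegas} shows that, despite $\eps<\delta$, the $\omega$-tower started at a suitably large $R$ dominates the $\Omega$-tower of the same height, giving $w_1\in A(F)$. To repair your proof you would need to replace the whole-iterate estimate by this per-factor bookkeeping (or restrict the statement to a single finite-order function in $\Blogn$).
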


We begin by proving the following lemma.

\begin{lemma}\label{fast}
Let $F \in \Blogn$, let $K>1$ and let $\alpha, \beta > 0$. Let $T$ be a tract of $F$ with bounded wiggling, with constants $K'>1$ and $\mu > 0$, and let $M>0$ be the constant given by Lemma~\ref{5.4}. There exists a constant $\eps>0$, depending only on $K$, such that, if $w, \zeta \in \overline{T}$,
\begin{equation}\label{wzeta}
\re w>K(\re \zeta)^+ + M
\end{equation}
and
\begin{equation}\label{Fwzeta}
|\im F(w) - \im F(\zeta)| \leq \alpha \max\{ \re F(w), \re F(\zeta)\} + \beta,
\end{equation}
then
\[
\re F(w)>\exp(\eps\re w)\re F(\zeta).
\]
\end{lemma}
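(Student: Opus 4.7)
The plan is to derive Lemma~\ref{fast} essentially as a corollary of Lemmas~\ref{5.2}(a) and~\ref{5.4}, with the only real work being to convert the head-start inequality $\re w > K(\re\zeta)^+ + M$ into a lower bound for $|w-\zeta|$ in terms of $\re w$.

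First I would apply Lemma~\ref{5.4}. Its hypotheses are exactly \eqref{wzeta} and \eqref{Fwzeta} (together with the fixed choice of $M$ in the statement), so it gives
\[
\re F(w) > K\re F(\zeta) + M.
\]
Since $F \in \Blogn$ we have $\re F(\zeta) > 0$, and $K>1$, so in particular $\re F(w) > \re F(\zeta)$. In view of \eqref{Fwzeta}, this implies
\[
|\im F(w) - \im F(\zeta)| \le \alpha\re F(w) + \beta,
\]
which is precisely the second hypothesis of Lemma~\ref{5.2}(a).

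Next I would verify the distance hypothesis $|w-\zeta| \ge \delta(\alpha,\beta)$ of Lemma~\ref{5.2}(a), and simultaneously produce the lower bound for $|w-\zeta|$ in terms of $\re w$. Note that \eqref{wzeta} forces $\re w > M$, and a case split on the sign of $\re\zeta$ gives
\[
\re w - \re\zeta > \frac{K-1}{K}\re w \qquad\text{and}\qquad \re w - \re\zeta > M.
\]
Indeed, if $\re\zeta \le 0$ then $\re w - \re\zeta \ge \re w > \tfrac{K-1}{K}\re w$; while if $\re\zeta > 0$ then \eqref{wzeta} rearranges to $\re\zeta < (\re w - M)/K$, so $\re w-\re\zeta > \tfrac{K-1}{K}\re w + M/K$. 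Hence
\[
|w-\zeta| \ge \re w - \re\zeta > M > \delta(\alpha,\beta),
\]
using the fact, recorded in Lemma~\ref{5.4}, that $M > \delta(\alpha,\beta)$.

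Finally I would apply Lemma~\ref{5.2}(a) to obtain
\[
\re F(w) > \exp\!\bigl(\tfrac{1}{16\pi}|w-\zeta|\bigr)\re F(\zeta) > \exp\!\Bigl(\tfrac{1}{16\pi}\cdot\tfrac{K-1}{K}\re w\Bigr)\re F(\zeta),
\]
so the choice $\eps = (K-1)/(16\pi K)$, which depends only on $K$, completes the proof. There is no real obstacle here; the only subtlety is making sure that all the quantitative hypotheses of Lemmas~\ref{5.2}(a) and~\ref{5.4} are met simultaneously, and that the resulting $\eps$ depends only on $K$ and not on the other parameters $\alpha,\beta,K',\mu,M$ (which is automatic since $\eps$ appears only as the coefficient of $\re w$ in the exponent, coming from the constant $K$ via the head-start hypothesis).
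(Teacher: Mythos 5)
Your proof is correct and follows essentially the same route as the paper: use the head-start hypothesis to bound $|w-\zeta|\ge\re w-\re\zeta>(1-1/K)\re w$ and $|w-\zeta|>M>\delta(\alpha,\beta)$, invoke Lemma~\ref{5.4} to get $\re F(w)>\re F(\zeta)$, and then apply Lemma~\ref{5.2}(a), arriving at the same constant $\eps=\tfrac{1}{16\pi}(1-1/K)$. Your version merely spells out the case split on the sign of $\re\zeta$ and the verification of the hypotheses of Lemma~\ref{5.2}(a), which the paper leaves implicit.
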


\begin{proof}
    By~\eqref{wzeta}, we have
 \begin{equation}\label{distance}
|w-\zeta|\ge \re w - \re \zeta \ge (1-1/K)\re w.
 \end{equation}
 By Lemma~\ref{5.4}, we have $\re F(w)>\re F(\zeta)$. So, since $M > \delta(\alpha, \beta)$ and, by (3.1), $|w-\zeta|>M$,  it follows from Lemma~\ref{5.2}(a) and~\eqref{distance} that
 \begin{equation}\label{difference}
 \re F(w) >
  \exp(\tfrac{1}{16\pi}|w - \zeta|)\re F(\zeta)
 \geq  \exp(\tfrac{1}{16\pi}(1-1/K)\re w)\re F(\zeta).
\end{equation}
The result now follows on taking $\eps = \tfrac{1}{16\pi}(1-1/K)$.
\end{proof}

The next result follows from Lemma~\ref{fast}.

\begin{corollary}\label{cor}
Let $F$ be a finite composition of functions $F_i$, $1 \leq i \leq p$, in the class $\Blogn$ such that the tracts of $F_i$, $1\le i\le p$, have uniformly bounded slope and uniformly bounded wiggling. Let $w_0' \in I(F)$. Then there exist $\eps > 0$ and $k,N_0 \in \N$ such that $w_0 = F^k(w_0')$ can be connected to $\infty$ by a curve $\Gamma \subset I(F)$ and, for all  $w_1 \in \Gamma \setminus  \{w_0\}$,
\[\re F^{n+m}(w_1)>\omega^{pn}(\re F^{m}(w_0)), \; \mbox{ for } n \in \N, \; m \geq N_0,\]
where $\omega(t)=\exp(\eps t)$.
\end{corollary}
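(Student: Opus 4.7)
The plan is to marry the linear head-start condition for~$F$ with repeated applications of Lemma~\ref{fast} to the individual factors~$F_i$ of $F=F_p\circ\cdots\circ F_1$: each application of some~$F_i$ will contribute one factor of~$\omega$, so that one iterate of~$F$ yields~$p$ such factors and $n$ iterates yield~$pn$, giving the exponent in the conclusion. To begin, I would invoke Lemma~5.7 of~\cite{RRRS}, as summarised in Section~2, to conclude that $F$ itself satisfies a linear head-start condition for some $K>1$, $M>0$. Lemma~\ref{RRRSle} then produces $k\in\N$ and a simple curve $\Gamma\subset I(F)$ joining $w_0=F^k(w_0')$ to~$\infty$, on which $\re F^n\to\infty$ uniformly; all points of~$\Gamma$ share a common external address~$\underline{s}$ and are exactly those $w\in I_{\underline{s}}(F)$ with $w\succeq w_0$ in the head-start ordering.

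Next, fix $w_1\in\Gamma\setminus\{w_0\}$. Since $w_1\succ w_0$, some $N\in\N$ satisfies $\re F^N(w_1)>K(\re F^N(w_0))^++M$, and clause~(1) of the head-start condition propagates this inequality to every $m\ge N$. I would choose $N_0\ge N$ large enough that $\re F^m(w_0)$ is so large, for $m\ge N_0$, that every intermediate real part in computing $F^{l+1}(w_0)=F_p\circ\cdots\circ F_1(F^l(w_0))$, $l\ge m$, is at least~$1$; this can be arranged because $w_0\in I(F)$ and each $F_i\in\Blogn$ enjoys the expansion property~\eqref{exp}. The heart of the argument is then an induction on the number of applied factors. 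Writing $G_j=F_{((j-1)\bmod p)+1}$, and, for fixed $m\ge N_0$, $u_0=F^m(w_1)$, $v_0=F^m(w_0)$, $u_j=G_j(u_{j-1})$, $v_j=G_j(v_{j-1})$ for $1\le j\le pn$, I would establish simultaneously that $\re u_j>K(\re v_j)^++M$ (by Lemma~\ref{5.4} applied to~$G_j$) and that $\re u_j>\omega^j(\re v_0)$. The inductive step for the second inequality combines Lemma~\ref{fast} applied to $G_j$ with the inductive hypothesis:
$\re u_j>\exp(\eps\re u_{j-1})\,\re v_j\ge\exp\bigl(\eps\,\omega^{j-1}(\re v_0)\bigr)\cdot\re v_j=\omega^j(\re v_0)\cdot\re v_j\ge\omega^j(\re v_0)$,
the final step being exactly where $\re v_j\ge 1$ is needed. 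Taking $j=pn$ delivers the desired bound $\re F^{n+m}(w_1)>\omega^{pn}(\re F^m(w_0))$.

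The main obstacle will be verifying the hypotheses of Lemma~\ref{fast} at all $pn$ intermediate stages. Two checks are required: first, the head-start condition on $(u_{j-1},v_{j-1})$, which must be preserved through each individual~$F_i$ via Lemma~\ref{5.4}, and which succeeds thanks to the bounded slope and wiggling of every $F_i$; second, the imaginary-part estimate on $(u_j,v_j)$ demanded by Lemma~\ref{fast}, which is automatic as soon as $u_j$ and $v_j$ lie in a common tract of $G_{j+1}$ of uniformly bounded slope. This common-tract fact follows because $u_0,v_0$ share an external address for~$F$ and each~$F_i$ is a conformal isomorphism onto~$\H$, so that the two orbits track each other tract-by-tract through the entire composition. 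Once these technicalities are set up, the induction is routine, and one obtains a uniform $\eps>0$ by taking the minimum of the constants supplied by Lemma~\ref{fast} over the finitely many $F_i$.
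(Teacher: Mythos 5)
Your overall route is the paper's: obtain a linear head-start condition for $F$, use Lemma~\ref{RRRSle} to produce $\Gamma$, push the head-start inequality through the individual factors with Lemma~\ref{5.4}, and chain applications of Lemma~\ref{fast} to build the tower $\omega^{pn}$. However, several supporting steps are not justified as you state them, and the first is a genuine gap at the start of your induction. The constant $M$ in Lemmas~\ref{5.4} and~\ref{fast} is the specific constant produced by Lemma~\ref{5.4} from $K',\mu,\alpha,\beta,K$, and it may well exceed the additive constant, say $M_1$, of the linear head-start condition that $F$ satisfies. From $w_1\succ w_0$ and clause~(1) of the head-start condition you obtain and propagate only $\re F^n(w_1)>K(\re F^n(w_0))^++M_1$; nothing in your argument yields the same inequality with the larger constant $M$ that is needed to start (and then sustain, via Lemma~\ref{5.4}) the inductive hypothesis $\re u_j>K(\re v_j)^++M$. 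The paper bridges exactly this point with Lemma~\ref{5.2}(b): since $\re F^n(w_1)-\re F^n(w_0)\to\infty$ (because $w_0\in I(F)$), one may choose $N_0$ with $\re F^{N_0-1}(w_1)\ge \re F^{N_0-1}(w_0)$ and $|F^{N_0-1}(w_1)-F^{N_0-1}(w_0)|\ge\max\{\Delta,M\}$, and then Lemma~\ref{5.2}(b) gives $\re F^{N_0}(w_1)>K\re F^{N_0}(w_0)+M$.

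Your two ``checks'' are also not established by the reasons you give. That each $F_i$ maps each of its tracts conformally onto $\H$ is no reason why the partial images of the two orbits lie in a common tract of the next factor; on the contrary, the image of a tract is all of $\H$ and meets every tract of the next map. What is needed is that the whole curve $F_i\circ\cdots\circ F_1\circ F^n(\Gamma)$ lies in a single tract of $F_{i+1}$, and the paper arranges this by taking $k$ so large that all forward images $F^n(\Gamma)$ lie in $\{w:\re w>C\}$, where $C=\max\{\re F_p\circ\cdots\circ F_i(w):\re w=0,\ 1\le i\le p\}$, so that the connected image curves cannot meet a tract boundary (an argument via connectedness of the tracts of the composition could replace this, but some such argument must be made). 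Finally, your requirement that every intermediate real part $\re F_i\circ\cdots\circ F_1(F^l(w_0))$ be at least $1$ cannot be ``arranged'' using the expansion property~\eqref{exp}: that is a lower bound on $|F_i'|$ and gives no lower bound on real parts of images---a point of arbitrarily large real part in a tract may be mapped to a point of arbitrarily small positive real part. The paper's corresponding inequality ($\re F^m(w_0)>1$ for $m\ge N_0$) is stated only at full iterates of $F$; whatever control you need at the intermediate stages must come from elsewhere (for instance from the geometry secured by the choice of $k$ and the constant $C$), not from~\eqref{exp}. Until these points are repaired, the hypotheses of Lemmas~\ref{5.4} and~\ref{fast} are not verified at the $pn$ intermediate stages and the induction does not close, even though the architecture of your argument is the intended one.
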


\begin{proof}
 Let $w_0'\in I(F)$ and let $\underline{s}$ denote the address of $w'_0$ with respect to the function~$F$. Since the tracts of~$F_i$, $1\le i \le p$, have uniformly bounded slope and uniformly bounded wiggling, $F$ satisfies a linear head-start condition, as noted in Section~2. By Lemma~\ref{RRRSle}, for all large $k$ the set $\Gamma = \{w \in I_{\sigma^k(\underline{s})}(F): w \succeq F^k(w'_0)\}$ is a simple curve connecting $F^k(w'_0)$ to $\infty$ within a single tract of~$F$, on which $\re F^n(w)\to\infty$ uniformly. We assume $k$ to be so large that
\[
F^n(\Gamma)\subset\{w:\re w>C\},\quad\text{for }n\ge k,
\]
where
\[
C=\max\{\re F_p\circ \cdots \circ F_i(w):\re w=0,1\le i\le p\}.
\]
It follows that, for each $n\ge k$ and $1\le i\le p$, the curve
\begin{equation}\label{curves}
F_i\circ \cdots F_1\circ F^n(\Gamma)
\end{equation}
lies entirely in a single tract of the function~$F_{i+1}$. (Here, we take $F_{p+1}=F_1$.)

We now let $w_0 = F^k(w'_0)$ and let $w_1 \in \Gamma \setminus\{w_0\}$ so that $w_1 \succ w_0$. Thus $w_0, w_1 \in I(F)$. By hypothesis, there exist $\alpha, \beta > 0$, $K'>1$ and $\mu > 0$ such that the tracts of $F_i$, $1 \leq i \leq p$, have bounded slope, with constants $\alpha, \beta$, and bounded wiggling, with constants $K', \mu$. Further, the tracts of $F$ have bounded slope with constants $\alpha, \beta$.

Since $w_1 \succ w_0$ with respect to a linear head-start condition for $F$, there exist $K>1$, $M_1>0$, $N \in \N$ such that
\begin{equation}\label{hs}
\re F^n(w_1) > K \re F^n(w_0) + M_1, \mbox{ for } n \geq N.
\end{equation}

Now let $M$ be the constant given by Lemma~\ref{5.4} and $\Delta$ be the constant given by Lemma~\ref{5.2}(b). It follows from~\eqref{hs} and the fact that $w_0 \in I(F)$ that there exists $N_0 \geq N+1$ such that $\re F^{N_0 - 1}(w_1) \geq \re F^{N_0 - 1}(w_0)$ and also $|F^{N_0 - 1}(w_1) - F^{N_0 - 1}(w_0)| \geq \max\{\Delta, M\}$. Since $F \in \Blogn(\alpha, \beta)$, it follows from Lemma~\ref{5.2}(b) that
\begin{equation}\label{N_0}
\re F^{N_0}(w_1) > K \re F^{N_0}(w_0) + M.
\end{equation}
Further, since $w_0 \in I(F)$, we may assume that $N_0$ was chosen sufficiently large to ensure that
\begin{equation}\label{one}
\re F^m(w_0) > 1, \; \mbox{ for } m \geq N_0.
\end{equation}
Then, since $F_i \in \Blogn(\alpha, \beta)$, for $1 \leq i \leq p$, it follows from~\eqref{curves} that we can start from~\eqref{N_0} and apply Lemma~\ref{5.4} repeatedly to deduce that
\begin{equation}\label{est1}
\re F_{i}\circ \cdots \circ F_1 \circ F^{n+m}(w_1) >K \re F_{i}\circ \cdots \circ F_1 \circ F^{n+m}(w_0)+M,
\end{equation}
for $n\in\N$, $m \geq N_0$ and $1 \leq i \leq p$. Finally, we  apply Lemma~\ref{fast} repeatedly with
\[
w=F_{i}\circ \cdots \circ F_1 \circ F^{n+m}(w_1)\quad\text{and}\quad \zeta =F_{i}\circ \cdots \circ F_1 \circ F^{n+m}(w_0),
\]
for $n\in \N$ and $1\le i\le p$. In view of~\eqref{one}, this shows that there exists $\eps > 0$ such that, if $n \in \N$ and $m \geq N_0$, then
\[\re F^{n+m}(w_1)>\omega^{pn}(\re F^{m}(w_0)),\]
where $\omega(t)=\exp(\eps t)$.
\end{proof}

 We now show how Theorem~\ref{mainlog} follows from Corollary~\ref{cor}.

 \begin{proof}[Proof of Theorem~\ref{mainlog}]

 Let $F$ be a finite composition of functions $F_i$, $1 \leq i \leq p$, of finite order in the class $\Blogn$. Then, as noted in Section 2, the tracts of $F_i$ have uniformly bounded slope and uniformly bounded wiggling, and so we can apply Corollary~\ref{cor}. Now let $w_1 \in \Gamma \setminus \{w_0\}$, where $\Gamma$ and $w_0$ are as described in Corollary~\ref{cor}. In order to prove Theorem~\ref{mainlog}, it is sufficient to show that $w_1 \in A(F)$.

We begin by noting that, since each function $F_i$, $1 \leq i \leq p$, has finite order, there exist $\delta>0$ and $r_0>0$, such that
\[
r< M(r,F_i) \leq \exp(\delta r), \mbox{ for } r \geq r_0, \; 1 \leq i \leq p.
\]
So, for $n \in \N$,
\begin{equation}\label{Omega}
M^n(r,F) \leq \Omega^{np}(r), \mbox{ for } r \geq r_0,
\end{equation}
where $\Omega(r) = \exp(\delta r)$.

The next lemma compares the iterative behaviours of the functions $\omega$ and $\Omega$.

\begin{lemma}\label{omegas}
Let $\omega(r) = \exp(\eps r)$ and $\Omega(r) = \exp(\delta r)$, where $\delta>\eps > 0$. Then, for each $r>0$, there exists $R>0$ such that
\[
\omega^n(R)\ge\Omega^n(r),\quad\text{for }n\in\N.
\]
\end{lemma}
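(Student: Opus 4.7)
The plan is to choose a constant $C > \delta/\eps$ and to prove by induction on $n$ that
\[
\omega^n(R) \geq C\,\Omega^n(r) \quad \text{for all } n \in \N,
\]
where $R := Cr$ and $C$ is chosen suitably large depending on $r$. Since $C > 1$, this immediately yields the desired inequality $\omega^n(R) \geq \Omega^n(r)$.

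The base case $n=0$ is just the definition of $R$. For the inductive step, assuming $\omega^n(R) \geq C\,\Omega^n(r)$, we have
\[
\omega^{n+1}(R) = \exp(\eps\,\omega^n(R)) \geq \exp(\eps C\,\Omega^n(r)),
\]
and the required inequality $\omega^{n+1}(R) \geq C\,\Omega^{n+1}(r) = C \exp(\delta\,\Omega^n(r))$ becomes, after taking logarithms,
\[
(\eps C - \delta)\,\Omega^n(r) \geq \ln C.
\]
The factor $\eps C - \delta$ is strictly positive since $C > \delta/\eps$, so it suffices to supply a uniform lower bound for $\Omega^n(r)$. Since $\Omega(x) = \exp(\delta x) > 1$ for every $x > 0$, we have $\Omega^n(r) \geq \min\{r,1\} =: m > 0$ for all $n \in \N$. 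Hence it remains only to choose $C$ so large that, in addition to $C > \delta/\eps$, we also have $\ln C \leq (\eps C - \delta)\,m$; this is possible because $\ln C/(\eps C - \delta) \to 0$ as $C \to \infty$.

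The main conceptual obstacle is choosing the correct form of the inductive hypothesis. The naive hypothesis $\omega^n(R) \geq \Omega^n(r)$ is not preserved under the iteration, and even the threshold choice $\omega^n(R) \geq (\delta/\eps)\,\Omega^n(r)$ fails to propagate: it implies $\omega^{n+1}(R) \geq \Omega^{n+1}(r)$ but not the hypothesis at the next level. Taking $C$ strictly greater than $\delta/\eps$ produces a multiplicative growth of the gap that absorbs the additive loss $\ln C$, at the cost of having to take $C$, and hence $R$, depending on $r$. Once the correct hypothesis is identified, the remainder is a routine induction.
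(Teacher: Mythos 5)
Your proof is correct and follows essentially the same route as the paper: both establish by induction the strengthened inequality $\omega^n(R)\ge c\,\Omega^n(r)$ for a multiplicative constant $c>1$ chosen so that the slack in the exponent absorbs the additive loss $\ln c$. The only difference is bookkeeping: the paper fixes $c=2\delta/\eps$ and takes $R$ large (using $\omega(t)\ge t$ to keep $\omega^k(R)\ge R$), whereas you take $c=C$ large with $R=Cr$ and use the uniform lower bound $\Omega^n(r)\ge\min\{r,1\}$.
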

\begin{proof}
 Put $s=2\delta/\eps$ and take $R\ge\max\{sr, \tfrac{2}{\eps} \ln s\}$ so large that $\omega(t)\ge t$, for $t\ge R$. Then, since $s>1$, it is sufficient to show that
\begin{equation}\label{omegaineq}
\Omega^n(r)\le \tfrac{1}{s}\omega^n(R),\quad\text{for } n \in \N.
\end{equation}
      The inequality~\eqref{omegaineq} follows by induction: it clearly holds for $n=1$ and if it holds for $n=k$, then
\[
\Omega^{k+1}(r)\le \Omega(\tfrac{1}{s}\omega^k(R)) < \exp\left(\eps \omega^k(R) + \ln \tfrac{1}{s}\right) = \tfrac{1}{s} \omega^{k+1}(R),
\]
as required.
\end{proof}
Now, choose $r$ so large that $r>\min_{w\in J(F)}\re w$ and $r\ge r_0$, and let $R$ be given by Lemma~\ref{omegas} and $N_0$ be given by Corollary~\ref{cor}. Then take $m \geq N_0$ such that $\re f^{m}(w_0) \ge R$. It follows from Corollary~\ref{cor}, Lemma~\ref{omegas} and~\eqref{Omega} that, for~$n\in\N$,
\[
\re F^{n+m}(w_1) > \omega^{pn}(R)\ge \Omega^{pn}(r) \geq M^{n}(r,F).
\]
Thus $w_1 \in A(F)$, as required.
\end{proof}

\begin{proof}[Proof of Theorem~\ref{main}]
Theorem~\ref{main} follows from Theorem~\ref{mainlog} since the logarithmic transform of a function of finite order in the class $\B$ is a function of finite order in the class $\B_{\log}$. Further, if $z_0 \in I(f)$, and $F$ is a logarithmic transform of $f$, then there exists $N \in \N$ such that $f^N(z_0) = \exp(w_0')$, for some $w_0' \in I(F)$. Thus $z_0$ can be connected to $\infty$ by a simple curve $\gamma \subset I(f)$ which is a pullback under $f^{N+k}$ of $\exp \Gamma$, where $\Gamma$ is as in  Theorem~\ref{mainlog}. The result now follows from the fact that $\exp A(F) \subset A(f)$ and from the complete invariance of $A(f)$.
\end{proof}

{\it Remark.}
 Similarly, Theorem~\ref{mainlog} can be used to show that, if $f$ is an entire function of finite order with a logarithmic tract $V$ and $z_0 \in I(f,V)$, then $z_0$ can be connected to $\infty$ by a curve $\gamma \subset I(f,V)$ such that $\gamma \setminus \{z_0\} \subset A(f)$.

\section{Proof of Theorem~\ref{Fatou} and examples}
\setcounter{equation}{0}
We begin this section by proving Theorem~\ref{Fatou}.

\begin{proof}
It follows from~\cite[Theorem 5.10]{RRRS} that each component of
$J(g)$ is a curve to $\infty$ homeomorphic to $[0,\infty)$, all points of which
lie in $I(g)$ except possibly the endpoint. (Note that this result was first proved by Bara\'nski~\cite[Theorem~C]{kB07} for the case that $g$ has finite order.)

By a result of Bergweiler~\cite{wB95}, we have
\[\pi^{-1}(J(g))=J(f).\]
Since $0\in F(g)$, the set $J(f)$ also consists of a
family of disjoint curves homeomorphic to $[0,\infty)$, each
component $\gamma$ of $J(g)$ corresponding to a countable family of
disjoint congruent curves in $J(f)$ homeomorphic to $[0,\infty)$, say
$\gamma_n$, $n\in \Z$. This proves part~(a).

Clearly each point of $I(f)\cap J(f)$ lies in some component
$\gamma_n$ of $J(f)$ corresponding to a component $\gamma$ of $J(g)$.
On the other hand, for such a component $\gamma$ of $J(g)$ all
points, except possibly the endpoint, lie in $I(g)$ by~\cite[Theorem 5.10]{RRRS} and hence in
$A(g)$, by Theorem~\ref{main}. Bergweiler and Hinkkanen~\cite[Theorem~5]{BH99}
proved that
\[\pi^{-1}(A(g))\subset A(f),\]
so all points, except possibly the endpoints, of the corresponding curves
$\gamma_n$, $n\in \Z$, lie in $A(f)$ and hence lie in $I(f)\cap
J(f)$. Since some of the endpoints of these components of $J(f)$ do
not lie in $I(f)$, because $J(f)$ must include the repelling periodic
cycles of $f$, the proof of part~(b) is complete.
\end{proof}

Theorem~\ref{Fatou} can be applied to all functions of the form
\[f(z)=z+\lambda+e^{-z},\quad \text{where } \re \lambda>0,\]
by taking $\pi(z)=e^{-z}$ and $g(w)=e^{-\lambda}we^{-w}$. Then $g$ is
a {\tef} of order~1 which is a self map of $\C^*$ and it has an
attracting fixed point at~$0$. In this case $\sing (g^{-1})$ consists
of the asymptotic value~$0$ and the critical value $e^{-1-\lambda}$.
To see that this critical value must lie in the immediate basin
of~$0$, we observe that if it does not, then the branch of $g^{-1}$
which maps~$0$ to~$0$ can be analytically continued to the whole of
the basin, as in~\cite[proof of Theorem~2.2]{CG}, which is
impossible.

Theorem~\ref{Fatou} can also be applied to some functions of the form
\[f(z)=mz+\lambda+e^{-z},\quad\text{where }m\in \N, m\ge 2, \lambda\in\C,\]
by taking $\pi(z)=e^{-z}$ and $g(w)=e^{-\lambda}w^m e^{-w}$. Then $g$
is a {\tef} of order~1 which is a self map of $\C^*$ and it has a
super-attracting fixed point at~$0$. In this case $\sing (g^{-1})$
consists of the critical and asymptotic value~$0$, and the critical
value $m^m e^{-m-\lambda}$. The values of $\lambda$ for which this
latter critical value is contained in the immediate basin of~$0$
depend on~$m$. For example, if $\lambda=0$, then it is easy to check
graphically that $m^m e^{-m}$ lies in the immediate basin of~$0$ for
$m=2$ and $m=3$, but it lies in a different component of~$F(g)$ for
$m=4$. The set of $\lambda$ for which Theorem~3 can be applied in
this case includes the half-plane
\[\{\lambda:\re \lambda>1+m(\ln m-1)\},\]
since for these values of $\lambda$ we have $m^m e^{-m-\lambda}\in
\{w:|w|<1/e\}\subset F(g)$, and includes the half-line
\[\{\lambda\in\R:\lambda>(m-1)(\ln (m-1)-1)\},\]
since for these values of $\lambda$ we have $0<g(u)<u$, for $u>0$.

\section{Fast escaping curves for functions of infinite order}
\setcounter{equation}{0}
Let $f$ be a function in the class $\B$ and let $F$ denote a logarithmic transform of $f$ in the class $\Blogn$. As described in Section 2, it was shown in~\cite{RRRS} that the conclusion of Theorem~\ref{RRRSth} holds whenever $F$ satisfies a linear head-start condition and that such a condition is satisfied whenever the tracts of $F$ have uniformly bounded slope and uniformly bounded wiggling. In particular, $F$ satisfies such a condition if $f$ has finite order.

There are, however, many functions of {\it infinite order} in the class $\B$ for which the tracts of $F$ have uniformly bounded slope and uniformly bounded wiggling; for example, the functions studied in~\cite{Sta91} and~\cite{Sta00} are of this type. It is natural to ask whether the conclusion of Theorem~\ref{main} also holds for such functions. In this section we show that this is the case provided that the tracts of $F$ satisfy a further geometric condition (which holds for the functions studied in~\cite{Sta91} and~\cite{Sta00}). In the next section we give an example which shows that it is necessary to impose an extra condition of this type.

We first introduce some notation. Let $F \in \B_{\log}$ and let $T$ be a tract of $F$. If $\zeta \in \overline{T}$ and $a > \re \zeta$, then $L_{\zeta,a}$ denotes the unique component of $\{w: \re w = a\} \cap T$ that separates $\zeta$ from $\infty$ (in~$T$) and that can be joined to $\zeta$ by a path lying, apart from its endpoints, in $\{w: \re w < a\} \cap T$. Note that $L_{\zeta,a}$ is a cross cut of $T$ and $\zeta$ lies in the closure of the bounded component of $T\setminus L_{\zeta,a}$.

We now introduce the notion of a gulf of a tract of $F$ -- the terminology is motivated by the definition of a gulf of a tract of $f$ given in~\cite{EL}.

\begin{definition}
Let $F \in \Blogn$, let $T$ be a tract of $F$ and let $p$ denote the point in $\partial T$ for which $F(p) = 0$.
\begin{itemize}
\item[(a)]
Let $\zeta \in T$. We say that $\zeta$ belongs to a {\it gulf} of $T$ if there exists $a > \max\{\re \zeta, \re p\}$ such that $L_{\zeta,a}$ does not separate~$p$ from~$\infty$.
\item[(b)]
The tract $T$ has {\it bounded gulfs} with constant $C>1$ if
\[
 L_{\zeta,a}\;\text{ separates } p \text{ from }\infty,
\]
for all $\zeta \in T, a>0$, with $\re \zeta \geq \max\{\re p,1\}$ and $a \geq C \re \zeta$.
\end{itemize}
If all the tracts of $F$ have bounded gulfs for the constant $C$, then we say that the tracts have {\it uniformly bounded gulfs}.
\end{definition}

Parts (a) and (b) of the following theorem follow from the results in \cite{RRRS} that we described in Section 2; we include them here for completeness.

\begin{theorem}\label{infinite}
Let $F \in \Blogn$. Suppose that all the tracts of $F$ are translates of each
other by $2n\pi i$, $n \in \Z$, and have bounded slope, bounded wiggling and bounded gulfs. If $w_0' \in
I(F)$, then there exists $k \in \N$ such that $w_0 = F^k(w_0')$ can be connected to $\infty$ by a curve $\Gamma \subset I(F)$ such that
\begin{itemize}
\item[(a)] $\re F^n\to \infty$ as $n\to\infty$ uniformly on $\Gamma$,
\item[(b)] $\Gamma$ is a Devaney hair of $F$,
\item[(c)] $\Gamma \setminus \{w_0\} \subset A(F)$.
\end{itemize}
\end{theorem}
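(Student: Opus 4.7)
Parts (a) and (b) follow exactly as in the proof of Theorem~\ref{mainlog}: the hypotheses of bounded slope and bounded wiggling alone, with no order assumption, give $F$ a linear head-start condition via the analogue of~\cite[Proposition~5.4]{RRRS}, and Lemma~\ref{RRRSle} then produces a simple curve $\Gamma$ from $w_0 = F^{k}(w_0')$ to~$\infty$ on which $\re F^n \to \infty$ uniformly; $\Gamma$ is routinely verified to be a Devaney hair of~$F$ in the sense of Section~3. The substantive work is part~(c), for which the plan is to follow the template used to derive Theorem~\ref{mainlog} from Corollary~\ref{cor}.

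The first observation is that the proof of Corollary~\ref{cor} uses only bounded slope and bounded wiggling, so it applies verbatim here (with $p=1$) and gives
\[
\re F^{n+m}(w_1) > \omega^n\bigl(\re F^m(w_0)\bigr)
\]
for each $w_1 \in \Gamma \setminus \{w_0\}$, $m \geq N_0$, $n \in \N$, with $\omega(t) = \exp(\eps t)$. In the finite-order case Lemma~\ref{omegas} was used to compare $\omega^n$ with $M^n(\cdot, F)$; no function $\Omega(r)=\exp(\delta r)$ dominates $M(\cdot, F)$ when $F$ has infinite order, so the Lemma~\ref{fast} estimate $\re F(w) > \exp(\eps\re w)\re F(\zeta)$ must be upgraded to one that compares $\re F(w)$ directly with $M(\re w, F)$.

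To effect this upgrade, I plan to establish the following key lemma, using bounded gulfs together with the $2\pi i$-periodicity: there exist constants $c>0$ and $R_0>0$ depending only on the geometric data such that, for any tract $T$ of $F$ and any $w \in T$ with $\re w \geq R_0$ lying outside the bounded gulf region at its own level,
\[
\re F(w) \geq c\, M(\re w, F).
\]
The argument is via the hyperbolic metric: since $F\colon T\to \h$ is a conformal isomorphism, $d_T(w, z^*) = d_\h(F(w), F(z^*))$, where $z^* \in T$ achieves $M(\re w, F)$. Periodicity bounds the imaginary extent of each tract, bounded slope caps the Euclidean width of $T$ at level $\re w$, bounded gulfs rules out deep side-branches that could push $z^*$ hyperbolically far from $w$, and bounded wiggling forces the geodesic from $w$ to $z^*$ to stay near level $\re w$; combining these conditions should yield a uniform bound $d_T(w, z^*) \leq D$. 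The standard half-plane estimate $d_\h(z_1, z_2) \geq |\log(\re z_1/\re z_2)|$ then gives $\re F(w) \geq e^{-D} M(\re w, F)$.

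With the key lemma in hand, the linear head-start condition forces $F^n(w_1)$, for $n$ large, to be so far ahead of $F^n(w_0)$ that it cannot lie in a bounded gulf at its own level, so the lemma applies at each successive iterate. Iterating $\re F^{n+1}(w_1) \geq c\, M(\re F^n(w_1), F)$ then delivers $\re F^{n+L}(w_1) \geq M^n(R, F)$ for some $L$ and some $R$ large enough to be admissible in the definition of $A(F)$, so $w_1 \in A(F)$. The principal obstacle is the key lemma itself: extracting a single uniform constant $D$ from the combined geometric hypotheses demands delicate local analysis of the tract near its gulf threshold, and one must verify along the way that the curve $\Gamma$ indeed leaves every gulf region after finitely many iterations.
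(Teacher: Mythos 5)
Parts (a) and (b) are fine, and your reduction of (c) to a pointwise comparison between $\re F^{n+1}(w_1)$ and the maximum modulus function is the right general idea, but your key lemma is false as stated, and this is the heart of the matter. The inequality $\re F(w)\geq c\,M(\re w,F)$ at the point's \emph{own} level cannot hold under the hypotheses: bounded wiggling with constant $K'>1$ allows the (single) channel of the tract to cross a given level $\{\re w=r\}$ more than once, with an excursion out to level roughly $K'r$ in between, all inside a channel of width at most $2\pi$ and with bounded slope, bounded wiggling and bounded gulfs intact. By an Ahlfors-type modulus estimate, $\re F$ at the later crossing exceeds $\re F$ at the earlier crossing by a factor of order $e^{cr}$, so $M(r,F)$ can be larger than $\re F(w)$ by an unbounded factor even when $w$ lies on the escaping curve, well inside the tract, and in no gulf (your caveat ``outside the gulf region at its own level'' does not exclude such points). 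Correspondingly, your hyperbolic heuristic breaks down: $d_T(w,z^*)$ between two points at the same level is not uniformly bounded -- bounded wiggling only constrains geodesics to infinity from dropping too far below their starting level, it does not prevent the tract from meandering a Euclidean distance comparable to $r$ between two level-$r$ crossings (and, separately, $\re F\to 0$ at $\partial T$, so ``any $w\in T$'' is in any case too broad). This is exactly why the paper's Lemma~\ref{A(f)} is pitched at a proportionally \emph{lower} level: it proves $\re F^{n+1}(w_1)>\tfrac{1}{\eps}M(\eps\,\re F^n(w_1),F)$ with $\eps=1/(4DK')$ depending on the wiggling and gulf constants, using Lemma~\ref{gulfs} (bounded gulfs plus the maximum principle show the cross-cut $L_{p,a}$ with $a\geq DA$ dominates $M(A,F)$, where periodicity identifies the tract maximum with $M$), Lemma~\ref{Ahlfors} (Ahlfors distortion between the cross-cuts at levels $a_n=\re F^n(w_1)/4K'$ and $b_n=\re F^n(w_1)/2K'$), bounded slope to convert growth of $|F|$ into growth of $\re F$ at the points where $F^n(\Gamma)$ meets these cross-cuts, and the head-start condition to pass from $\re F(w_{b_n})$ to $\re F^{n+1}(w_1)$.

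There is also a secondary wrinkle in your final step: even if an inequality of the form $\re F^{n+1}(w_1)\geq c\,M(\re F^n(w_1),F)$ with $c<1$ were available, iterating it does not by itself yield $\re F^{n+L}(w_1)\geq M^n(R,F)$, since the constant compounds inside the argument of $M$ and $M$ may grow arbitrarily fast (this is the infinite-order setting). The paper's formulation, equivalent to $\eps\,\re F^{n+1}(w_1)>M(\eps\,\re F^n(w_1),F)$, is designed precisely so that the quantity $t_n=\eps\,\re F^{n+N_2}(w_1)$ satisfies $t_{n+1}>M(t_n,F)$ and the induction closes, giving $w_1\in A(F)$. So the correct repair of your plan is to replace the same-level comparison by a fixed-fraction comparison with the multiplicative factor placed as in Lemma~\ref{A(f)}, and to prove it by the cross-cut/Ahlfors argument rather than by a uniform hyperbolic-distance bound.
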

{\it Remarks.}\;1. The example in the next section shows that Theorem~\ref{infinite} is not true if the tracts do not have bounded gulfs.

2. It follows from Theorem~\ref{infinite} that, if $f$ is a meromorphic function with a logarithmic tract $V$, such that a logarithmic transform $F \in \Blogn$ of~$f$ has tracts of bounded slope, bounded wiggling and bounded gulfs, and $z_0 \in I(f,V)$, then~$z_0$ can be connected to $\infty$ by a curve $\gamma \subset I(f,V)$ such that $\gamma \setminus \{z_0\} \subset A'(f,V)$.

3. The result in Remark 2 implies that if $f \in \B$, $f$ has exactly one tract and $z_0 \in I(f)$, then $z_0$ can be connected to $\infty$ by a curve $\gamma \subset I(f)$ such that $\gamma \setminus \{z_0\} \subset A(f)$. If $f$ has more than one tract, then an analogous result holds but the conclusion has to be phrased in terms of points that tend to infinity as fast as possible with respect to a particular sequence of tracts of $f$. Note that such points may not belong to $A(f)$; see~\cite[end of Section 4]{BRS08}. It is also possible to obtain results concerning points which tend to infinity as fast as possible with respect to a given address -- we hope to return to this idea in future work.

In order to prove Theorem~\ref{infinite}, we begin by proving two preliminary results. The first concerns functions with bounded gulfs and bounded wiggling.

\begin{lemma}\label{gulfs}
Let $F \in \Blogn$, let $T$ be a tract of $F$ and let $p$ denote the point in $\partial T$ for which $F(p) = 0$. Suppose that $T$ has bounded gulfs with constant $C>1$ and bounded wiggling with constants $K'>1$ and $\mu>0$. There is a constant $D>1$ depending on $C,K'$ and $\mu$ such that if $A > \max\{\re p,1\}$ and $a\geq DA$, then
\begin{itemize}
\item[(a)] $L_{\zeta,a}=L_{p,a}$ whenever $\zeta\in T$ and $\re \zeta=A$,
\item[(b)]
\(
 \max_{w \in L_{p,a}} \re F(w) \geq \max_{w \in T, \re w = A} \re F(w).
\)
\end{itemize}

\end{lemma}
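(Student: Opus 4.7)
The plan is to deduce (b) from (a) via a maximum-principle argument, so the bulk of the work lies in establishing (a).

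For (a), I would begin by fixing $D \geq C$, to be enlarged later. Given $\zeta \in T$ with $\re \zeta = A$, the bounded gulfs condition applied to $\zeta$ at level $a$ yields that $L_{\zeta, a}$ separates $p$ from $\infty$, so both $p$ and $\zeta$ lie in the bounded component $B'$ of $T \setminus L_{\zeta, a}$. Granting for the moment that $B' \subset \{\re w < a\}$, any path in $B'$ joining $p$ to $\zeta$ has $\re w < a$ throughout; concatenating with a path from $\zeta$ to $L_{\zeta, a}$ in $\{\re w < a\}$ (which exists by the definition of $L_{\zeta, a}$) produces a path from $p$ to $L_{\zeta, a}$ in $\{\re w < a\}$ apart from its endpoint. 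Combined with the separation property of $L_{\zeta, a}$, the uniqueness clause in the definition of $L_{p, a}$ then forces $L_{p, a} = L_{\zeta, a}$.

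The main obstacle is to prove $B' \subset \{\re w < a\}$. Writing $\partial B' = L_{\zeta, a} \cup \gamma$, where $\gamma$ is a bounded subarc of $\partial T$ joining the endpoints of $L_{\zeta, a}$ on the inside of $B'$, the maximum principle applied to the harmonic function $\re w$ on $B'$ reduces this to showing $\gamma \subset \{\re w \leq a\}$. If $\gamma$ contained a point $w_0$ with $\re w_0 > a$, I would first use bounded wiggling: the hyperbolic geodesic from $w_0$ to $\infty$ in $\overline T$ has $\re w \geq \re w_0 / K' - \mu$ throughout but must cross $L_{\zeta, a}$ to reach $\infty$, forcing $\re w_0 \leq K'(a + \mu)$. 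Such an excursion of $\gamma$ corresponds to a \emph{sub-gulf} inside $B'$ and produces a further component of $\{\re w = a\} \cap T$ separating $p$ from $\infty$; applying bounded gulfs at a suitable interior point in this sub-gulf then gives a nested cross-cut whose existence, for $D$ sufficiently large in terms of $C$, $K'$, and $\mu$, contradicts the position of $L_{\zeta, a}$ as the first cross-cut reached from $\zeta$ by a path in $\{\re w < a\}$. The careful book-keeping of constants here is the technical heart of the argument.

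With (a) in hand, (b) follows at once from the maximum principle: every $\zeta \in T$ with $\re \zeta = A$ lies in $B(L_{\zeta, a}) = B(L_{p, a}) =: B$, a bounded Jordan domain whose boundary is $L_{p, a}$ together with a bounded subarc of $\partial T$. Since $F \in \Blogn$ extends continuously to $\overline T$ with $F(\partial T) \subset \partial \H = i \R$, the function $\re F$ is harmonic on $B$, continuous on $\overline B$, and vanishes on the $\partial T$-part of $\partial B$. The maximum principle gives $\max_{\overline B} \re F = \max_{L_{p, a}} \re F$, and since $\{w \in T : \re w = A\} \subset \overline B$, we conclude $\max\{\re F(w) : w \in T,\, \re w = A\} \leq \max\{\re F(w) : w \in L_{p, a}\}$, which is (b).
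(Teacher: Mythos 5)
Your part (b) is essentially the paper's own maximum-principle argument and is fine, but your route to part (a) has a genuine gap: the intermediate claim that the bounded component $B'$ of $T\setminus L_{\zeta,a}$ is contained in $\{\re w<a\}$ does not follow from the hypotheses, and is in fact false in general. The bounded gulfs condition is a statement only about the distinguished cross-cuts $L_{\xi,b}$ -- the level-$b$ component that \emph{separates $\xi$ from $\infty$} and is reachable from $\xi$ below level $b$ -- so it says nothing about a dead-end ``finger'' of $T$ lying inside $B'$ that rises above level $a$: for a point $\xi$ in such a finger and $b\ge C\re \xi$, the cross-cut $L_{\xi,b}$ typically lies outside the finger altogether (the level-$a$ or level-$b$ crossings inside the finger do not separate $\xi$, let alone $p$, from $\infty$), so bounded gulfs is not violated; and bounded wiggling only caps the height of such a finger by $K'(a+\mu)>a$, which is exactly the bound you derive. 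This is also precisely where your ``sub-gulf'' step breaks down: an excursion of $\partial B'$ above level $a$ does produce further components of $\{\re w=a\}\cap T$, but there is no reason these separate $p$ from $\infty$, and bounded gulfs cannot be ``applied at a suitable interior point'' to show that they do. So no amount of bookkeeping of constants will produce the contradiction, because the configuration you are trying to exclude is consistent with all the hypotheses.

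Note that what (a) actually requires is weaker than $B'\subset\{\re w<a\}$: one only needs that $p$ can be joined to $L_{\zeta,a}$ by a path lying below level $a$ apart from its endpoints (together with the separation property, which bounded gulfs does give since $a\ge DA\ge CA$), but your sketch supplies no mechanism for this either. The paper's proof works by contradiction and uses an intermediate level: assuming $L_{\zeta,a}\ne L_{p,a}$, it applies bounded gulfs to $\zeta$ both at level $a$ and at level $a'=a/(2K')\ge CA$, and shows by elementary cross-cut topology that $L_{\zeta,a'}$ must lie in the unbounded component of $T\setminus L_{p,a}$ while still separating $p$, and hence $L_{p,a}$, from $\infty$; consequently the hyperbolic geodesic from any point of $L_{p,a}$ to $\infty$ passes through level $a/(2K')$, contradicting bounded wiggling as soon as $a>2K'\max\{C,\mu\}A$ (so one may take $D=2K'\max\{C,\mu\}$). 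Some device of this kind -- forcing the geodesic from level $a$ down to a level strictly below $a/K'-\mu$ via the bounded-gulfs cross-cut of $\zeta$ at an intermediate height -- is what your argument is missing. Finally, observe that part (b) never needs $B'\subset\{\re w<a\}$: it only needs $\zeta\in\overline{T_{p,a}}$ (immediate from (a) and the definition of $L_{\zeta,a}$) and the vanishing of $\re F$ on $\partial T$.
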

\begin{proof}
Suppose that $a>2K'\max\{C,\mu\}A$ and put $a'=a/(2K')$. To prove part~(a), we assume (for a contradiction) that $\zeta\in T$ and $\re \zeta =A$, and $L_{\zeta,a}\ne L_{p,a}$. Since~$T$ has bounded gulfs with constant~$C$,
\begin{equation}\label{Lzeta0}
L_{\zeta,a'}\;\text{ separates } p \text{ from }\infty
\end{equation}
and hence
\begin{equation}\label{Lzeta1}
L_{\zeta,a}\;\text{ separates } p \text{ from }\infty.
\end{equation}

Let $T_{p,a}$ denote the bounded component of $T \setminus L_{p,a}$ and $U_{p,a}$ denote its unbounded component. Since $L_{\zeta,a} \neq L_{p,a}$, the cross cut $L_{\zeta,a}$ must belong to either $T_{p,a}$ or $U_{p,a}$.

   If $L_{\zeta,a}\subset T_{p,a}$, then by~\eqref{Lzeta1} any curve from $p$ to $L_{p,a}$ must cross $L_{\zeta,a}$, contrary to the definition of $L_{p,a}$. Thus
\[
 L_{\zeta,a}\subset U_{p,a}.
\]
This implies that
\begin{equation}\label{Lzeta4}
\zeta\in U_{p,a}
 \end{equation}
 for otherwise any curve from $\zeta$ to $L_{\zeta,a}$ must cross $L_{p,a}$, contrary to the definition of $L_{\zeta,a}$. It follows from~\eqref{Lzeta4} that
\begin{equation}\label{Lzeta3}
L_{\zeta,a'}\subset U_{p,a},
\end{equation}
for otherwise any curve from $L_{\zeta,a'}$ to $\zeta$ must cross $L_{p,a}$ and so contain points $w$ such that $\re w = a > a'$, contrary to the definition of $L_{\zeta,a}$.

It follows from~\eqref{Lzeta0} and~\eqref{Lzeta3} that any geodesic from $L_{p,a}$ to $\infty$ passes through $L_{\zeta,a'}$, so by bounded wiggling we have
\[
a'>\frac{a}{K'}-\mu;\quad\text{that is,}\quad\frac{a}{2K'}<\mu,
\]
which contradicts our choice of $a$. Hence $L_{p,a}= L_{\zeta,a}$.

To prove part~(b), let $\zeta$ denote a point in $\{w: \re w = A\} \cap T$ for which $\re F(\zeta) = \max_{w \in T, \re w = A} \re F(w)$. Once again let $T_{p,a}$ denote the bounded component of $T \setminus L_{p,a}$. Note that the boundary of $T_{p,a}$ consists of $L_{p,a}$ together with part of the boundary of $T$. Since $\re F(w) = 0$ for $w \in \partial T$, and $\re F(w) > 0$ for $w \in T$, it follows that
\begin{equation}\label{boundary}
\max_{w \in \partial T_{p,a}} \re F(w) = \max_{w \in L_{p,a}} \re F(w).
\end{equation}
By the maximum principle,
\begin{equation}\label{maxp}
\max_{w \in \partial T_{p,a}} \re F(w) = \max_{w \in \overline{T_{p,a}}} \re F(w).
\end{equation}
By part~(a) and the definition of $L_{\zeta,a}$ we have $\zeta \in T_{p,a}$ and so, by \eqref{boundary} and \eqref{maxp},
\[
 \max_{w \in L_{p,a}} \re F(w) \geq \re F(\zeta) = \max_{w \in T, \re w = A} \re F(w)
\]
as required.
\end{proof}

The second preliminary result is based on Ahlfors' distortion theorem.

\begin{lemma}\label{Ahlfors}
Let $F \in \Blogn$, let $T$ be a tract of $F$ and let $p$ denote the point in $\partial T$ for which $F(p) = 0$. If $a> \re p$ and $b > a + 4 \pi$ then
\begin{equation}\label{2}
\frac{|F(w_b)|}{|F(w_a)|} \geq \exp(\tfrac{1}{2}(b-a) - 4\pi), \mbox{ for } w_b \in L_{p,b}, w_a \in L_{p,a}.
\end{equation}
\end{lemma}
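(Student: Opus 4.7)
The plan is to straighten the target of $F$ into a horizontal strip and then invoke Ahlfors' distortion theorem. Since $F\colon T\to\H$ is a conformal isomorphism onto the right half-plane and $\H$ is simply connected, a single-valued branch $G(w)=\log F(w)$ exists on~$T$; it is a conformal isomorphism of~$T$ onto the horizontal strip $S=\{u+iv:|v|<\pi/2\}$ of width~$\pi$. Since $\re G(w)=\log|F(w)|$, it suffices to prove
\[
\re G(w_b)-\re G(w_a)\;\ge\;\tfrac12(b-a)-4\pi,\qquad w_a\in L_{p,a},\;w_b\in L_{p,b}.
\]

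For each $x\in\R$, let $\theta(x)$ denote the total linear measure of $T\cap\{w:\re w=x\}$. By property~(5) of~$\Blog$, $\exp|_T$ is injective, so $T$ is contained in a horizontal strip of height $2\pi$; consequently $\theta(x)\le 2\pi$ for every~$x$. Moreover, by construction, $L_{p,a}$ and $L_{p,b}$ are crosscuts of~$T$ lying on the vertical lines $\re w=a$ and $\re w=b$ respectively, each separating~$p$ from~$\infty$ in~$T$. These are precisely the crosscuts to which the standard form of Ahlfors' distortion theorem applies.

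Applying that theorem to $G\colon T\to S$ (target strip of width~$\pi$, giving a universal deficit of $4\pi$) yields, for $w_a\in L_{p,a}$ and $w_b\in L_{p,b}$,
\[
\re G(w_b)-\re G(w_a)\;\ge\;\pi\int_a^b\frac{dt}{\theta(t)}-4\pi\;\ge\;\frac{b-a}{2}-4\pi,
\]
where the hypothesis $b>a+4\pi$ guarantees $\int_a^b dt/\theta(t)>2$, the standard threshold needed for the theorem to give a non-trivial conclusion. Exponentiating both sides yields the claimed estimate. The only delicate point in the proof is citing Ahlfors' theorem with the correct deficit constant ($4\pi$ after normalising the strip width to~$\pi$); the bound $\theta(x)\le 2\pi$ from injectivity of $\exp|_T$ and the identification of $L_{p,a}$, $L_{p,b}$ as the relevant crosscuts do the geometric work, and everything else reduces to routine bookkeeping.
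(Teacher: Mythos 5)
Your argument is essentially the paper's: pass to $g=\log F$, which maps $T$ conformally onto the strip of width $\pi$, bound the vertical cross-sections of $T$ by $2\pi$, and apply the Nevanlinna form of Ahlfors' distortion theorem with deficit $4\pi$, using $b-a>4\pi$ to meet the threshold $\int_a^b dt/\theta(t)>2$; the paper works with $l(t)$, the length of the single crosscut $L_{p,t}$, rather than the full slice measure, but since $l(t)\le\theta(t)\le 2\pi$ this changes nothing. One small correction: injectivity of $\exp|_T$ does not imply that $T$ lies in a horizontal strip of height $2\pi$ (a tract of bounded slope can have unbounded imaginary parts); what injectivity gives directly is that no two points of $T$ on the same vertical line differ by a nonzero multiple of $2\pi i$, so each vertical slice of $T$ has linear measure at most $2\pi$, which is exactly the bound your argument needs.
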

\begin{proof}
For $t> \re p$, let $l(t)$ denote the length of $L_{p,t}$. Then $0 < l(t) \leq 2 \pi$. Now let $g(w) = \log F(w)$. Since $g$ maps $T$ univalently onto the strip $S = \{\xi : |\im \xi| < \pi / 2\}$, we can apply Ahlfors' distortion theorem to $g$. The version of this result in~\cite[page 97]{Nev} states that, if
\[
b > a > \re p\quad \mbox{and}\quad \int_a^b \frac{dt}{l(t)} > 2,
\]
then, for $w_b \in L_{p,b}$ and $w_a \in L_{p,a}$,
\[
\re g(w_b) - \re g(w_a)
 \geq  \pi \int_a^b \frac{dt}{l(t)} - 4 \pi > \frac{1}{2}(b-a) - 4\pi.
\]
and so, by the definition of $g$,
\[
\ln |F(w_b)| - \ln |F(w_a)| > \frac{1}{2}(b-a) - 4\pi.
\]
Since $0 < l(t) \leq 2\pi$, we have
\[
 \int_a^b \frac{dt}{l(t)} \geq \frac{b-a}{2 \pi} > 2 \quad\mbox{if } b-a > 4 \pi,
\]
and so the result holds if $b > a + 4 \pi$.
\end{proof}

We are now in a position to prove Theorem~\ref{infinite}.

\begin{proof}[Proof of Theorem~\ref{infinite}]
 Let $F$ be a function satisfying the conditions of Theorem~\ref{infinite}, let $w_0' \in I(F)$ and let $\underline{s}$ denote the address of $w'_0$. Since the tracts of~$F$ have bounded slope and bounded wiggling, $F$ satisfies a linear head-start condition, as noted in Section~2. Hence, by Lemma~\ref{RRRSle}, there exists $k \in \N$ such that $\Gamma = \{w \in I_{\sigma^k(\underline{s})}(F): w \succeq F^k(w'_0)\}$ is a simple curve in $I(F)$ connecting $F^k(w'_0)$ to $\infty$. We let $w_0 = F^k(w'_0)$ and let $w_1$ denote a point in $\Gamma$ with $w_1 \succ w_0$. In order to prove Theorem~\ref{infinite}, it is sufficient to show that $w_1 \in A(F)$.

 The following result is the main step in the proof of Theorem~\ref{infinite}.

\begin{lemma}\label{A(f)}
Let $w_1$ be as described above. There exist $N_1 \in \N$ and $ \eps \in (0, 1)$ such that, for all $n \geq N_1$,
\[
\re  F^{n+1}(w_1) > \frac{1}{\eps}M(\eps \re F^n(w_1),F).
\]
\end{lemma}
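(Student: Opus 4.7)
The plan is to bound $\re F(\zeta_n)$ below by combining the bounded-gulfs condition with Ahlfors' distortion theorem. Write $\zeta_n=F^n(w_1)$, $\eta_n=F^n(w_0)$, $r_n=\re\zeta_n$, $a_n=\re\eta_n$, and let $p_n\in\partial T_n$ be the point with $F(p_n)=0$. Since the tracts of $F$ satisfy bounded slope and bounded wiggling, $F$ satisfies a linear head-start condition, and $w_1\succ w_0$ gives $r_n>Ka_n+M_1$ for $n\ge N$, with $r_n\to\infty$ because $w_0\in I(F)$. As $F(\zeta_n),F(\eta_n)\in T_{n+1}$, bounded slope of $T_{n+1}$ supplies the imaginary-part hypothesis of Lemma~\ref{fast} for the pair $(\zeta_n,\eta_n)$, yielding $\re F(\zeta_n)>\exp(\eps_0 r_n)\re F(\eta_n)$; this forces $r_{n+1}/a_{n+1}\to\infty$, so after increasing $N$ we may assume $r_n\ge Da_n$ for $n\ge N$, where $D$ is the constant of Lemma~\ref{gulfs}.

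Next, fix $\eps\in(0,1)$ with $\eps<1/D$. Lemma~\ref{gulfs}(b), applied to $T_n$ with $A=\eps r_n$ and $a=D\eps r_n$, produces a point $w_n^*\in L_{p_n,D\eps r_n}$ with $|F(w_n^*)|\ge\re F(w_n^*)\ge M(\eps r_n,F)$. Applying Lemma~\ref{Ahlfors} to $T_n$ with this $a$ and $b=r_n$ (valid for $n$ large, since $(1-D\eps)r_n>4\pi$) then yields, for any $w_b\in L_{p_n,r_n}$,
\[
|F(w_b)|\;\ge\;\exp\!\bigl(\tfrac12(1-D\eps)r_n-4\pi\bigr)\,M(\eps r_n,F),
\]
and for $n$ sufficiently large the exponential factor exceeds $2(1+\alpha)/\eps$, where $\alpha$ is the bounded-slope constant of the tracts of $F$.

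The main obstacle, which I expect to require the most care, is to show that $\zeta_n$ itself lies on $L_{p_n,r_n}$, so that the preceding bound may be applied with $w_b=\zeta_n$. From $r_n\ge Da_n$, Lemma~\ref{gulfs}(a) applied to $\eta_n$ gives $L_{\eta_n,r_n}=L_{p_n,r_n}$, placing $\eta_n$ (and $p_n$) in the bounded component of $T_n\setminus L_{p_n,r_n}$. To place $\zeta_n$ itself on this cross cut, I would combine bounded wiggling (constraining the hyperbolic geodesic from $\zeta_n$ to $\infty$ to satisfy $\re w\ge r_n/K'-\mu$) with bounded gulfs (which forces any hypothetical gulf containing $\zeta_n$ at abscissa $r_n$ to merge with the main body of $T_n$ at abscissa $\le Dr_n$): for $r_n$ large these two restrictions are mutually incompatible with $\zeta_n$ sitting in a gulf.

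Given $\zeta_n\in L_{p_n,r_n}$, the displayed bound yields $|F(\zeta_n)|\ge(2(1+\alpha)/\eps)M(\eps r_n,F)$. Since $F(\zeta_n)\in T_{n+1}$ and $T_{n+1}$ has bounded slope with constants $(\alpha,\beta)$, $|\im F(\zeta_n)|\le\alpha\,\re F(\zeta_n)+C$ for some constant $C$ independent of $n$ (by the translation invariance of tract shapes), whence $|F(\zeta_n)|\le(1+\alpha)\re F(\zeta_n)+C$. Combining these two inequalities gives $\re F(\zeta_n)\ge(2/\eps)M(\eps r_n,F)-C'$, and since $M(\eps r_n,F)\to\infty$ the additive constant is absorbed for $n\ge N_1$, yielding $\re F(\zeta_n)>(1/\eps)M(\eps r_n,F)$, as required.
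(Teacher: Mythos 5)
Your opening moves (using the head-start condition and Lemma~\ref{fast} to get $\re F^n(w_1)\ge D\,\re F^n(w_0)$, then Lemma~\ref{gulfs}(b) to find a point of $L_{p_n,a}$ where $\re F\ge M(\eps\re F^n(w_1),F)$, then Lemma~\ref{Ahlfors} between two cross cuts) are exactly in the spirit of the paper's argument. But the step you yourself flag as the main obstacle is a genuine gap, and in fact the claim you want there is false in general: a point $\zeta_n=F^n(w_1)$ with $\re\zeta_n=r_n$ need not lie on the principal cross cut $L_{p_n,r_n}$, even assuming bounded slope, bounded wiggling and bounded gulfs. For example, the tract may have a dead-end finger attached to the main channel at abscissa about $2r_n$ which points backwards and ends at abscissa $r_n$, with $\zeta_n$ at its tip. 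Then $\zeta_n$ lies in a gulf (so your "incompatibility" argument cannot work): bounded gulfs with constant $C\ge 2$ is respected, since it only forces $L_{\zeta,a}$ to separate $p$ from $\infty$ for $a\ge C\re\zeta$, and bounded wiggling is respected, since the geodesic from $\zeta_n$ to $\infty$ stays at real parts $\ge r_n$ up to a constant; yet $\zeta_n$ lies in the unbounded component of $T_n\setminus L_{p_n,r_n}$, so it is not on that cross cut and Lemma~\ref{Ahlfors} cannot be applied to it directly. Nothing in the hypotheses, nor in the dynamics of $w_1$, excludes this configuration.

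The paper's proof avoids ever placing $F^n(w_1)$ on a cross cut. It works at the smaller abscissa $b_n=\tfrac{1}{2K'}\re F^n(w_1)$ and shows that the arc of the curve $F^n(\Gamma)$ from $F^n(w_0)$ to $F^n(w_1)$ must meet $L_{p_n,b_n}$: by Lemma~\ref{gulfs}(a) the point $F^n(w_0)$ lies on the bounded side, while bounded wiggling forbids $F^n(w_1)$ from lying on the bounded side (its geodesic to $\infty$ would have to reach abscissa $b_n<\tfrac1{K'}\re F^n(w_1)-\mu$). Choosing the crossing point $w_{b_n}$ so that $F^n(w_1)$ lies in the tail of the curve beyond it, the linear head-start condition gives $\re F^{n+1}(w_1)\ge\tfrac1{2K}\re F(w_{b_n})$, and the Ahlfors estimate for $w_{b_n}$ is then transferred to $F^{n+1}(w_1)$; this transfer mechanism is exactly what your proposal is missing. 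A secondary problem in your last paragraph: the bound $|\im F(\zeta_n)|\le\alpha\,\re F(\zeta_n)+C$ with $C$ independent of $n$ does not follow from bounded slope plus translation invariance, because the tract containing $F(\zeta_n)$ may be a $2\pi i m$-translate with $|m|$ large depending on the orbit; the paper instead converts $|F(\cdot)|$ into $\re F(\cdot)$ by comparing two points of the same tract lying on the image curve (this is the computation producing the constant $c=(4(\alpha+2))^{-1}$), and you would need to do the same, e.g.\ comparing with $F^{n+1}(w_0)$.
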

\begin{proof}
Recall that the tracts of $F$ have bounded slope with constants $\alpha, \beta > 0$, bounded wiggling with constants $K'>1$ and $\mu >0$, and bounded gulfs with constant $C>1$. Also,
the tracts of $F$ are translates of each other by $2n\pi i$, $n \in \Z$, and so, for each tract $T$,
\[
 \max_{w \in T, \re w = r} \re F(w) = M(r,F).
\]
 Let $T_n$ denote the tract of $F$ containing $F^n(w_1)$ and let $p_n$ denote the point in $\partial T_n$ for which $F(p_n) = 0$. Note that all the points $p_n$ are vertical translates of each other. We begin by applying Lemma~\ref{Ahlfors} to the tract $T_n$ with
\[
a_n = \frac{1}{4K'}\re F^n(w_1) \; \mbox{ and } \;
 b_n = \frac{1}{2K'} \re  F^n(w_1).
  \]

Since $w_1 \in I(F)$, the hypotheses of Lemma~\ref{Ahlfors} are satisfied provided that $n$ is sufficiently large. We now take $w_{b_n} \in L_{p_n,b_n}$ and $w_{a_n} \in L_{p_n,a_n}$ to be points on the unbounded curve $F^n(\Gamma)$. To see that this is possible, note that $F^n(w_0)$ is also on the curve $F^n(\Gamma)$ and, by Lemma~\ref{gulfs},  $L_{F^n(w_0),a_n} = L_{p_n,a_n}$ for large $n \in \N$. (The hypotheses of Lemma~\ref{gulfs} are satisfied for large $n \in \N$ since $w_0 \in I(F)$, the tracts of $F$ have bounded slope and $w_1 \succ w_0$ with respect to a linear head-start condition for $F$ so that, for $n$ sufficiently large, we can apply Lemma~\ref{5.2}(a) to $F^n(w_1)$ and $F^n(w_0)$ to deduce that
 \[
 \frac{a_n}{\re F^n(w_0)} = \frac{\re F^n(w_1)}{4K' \re F^n(w_0)} \to \infty \mbox{ as } n \to \infty.)
 \]
  Since $w_0 \in I(F)$ and $w_{a_n}, w_{b_n} \succ F^n(w_0)$, it follows from Lemma~\ref{Ahlfors} that, if $n$ is sufficiently large, then
\begin{equation}\label{3}
\frac{|F(w_{b_n})|}{|F(w_{a_n})|} \geq \exp(\tfrac{1}{2}(b_n-a_n) - 4\pi).
\end{equation}

Now take $c=(4(\alpha+2))^{-1}$. It follows from~\eqref{3} and the fact that
\[b_n - a_n = \frac{1}{4K'}\re F^n(w_1) \to \infty\;\text{ as } n \to \infty\]
that, for large $n$,
\begin{equation}\label{Lestimate}
|F(w_{b_n})|>(1/c)|F(w_{a_n})|.
\end{equation}
Since all points on the curve $F^n(\Gamma)$ have the same address, $F(w_{b_n})$ and
$F(w_{a_n})$ both belong to the same tract of $F$, which has
 bounded slope with constants $\alpha$ and $\beta$. Thus, by Definition~\ref{slope},
\begin{equation}\label{Def2}
 |\im F(w_{b_n}) - \im F(w_{a_n})|  \leq  \alpha \max\{\re F(w_{a_n}),\re F(w_{b_n})\} + \beta.
 \end{equation}
We deduce that, for large~$n$,
 \begin{equation}\label{modulus}
 \re F(w_{b_n}) > c |F(w_{b_n})|.
\end{equation}
Otherwise, by~(\ref{Lestimate}) and~(\ref{Def2}), we have, for large $n$,
\begin{eqnarray*}
|F(w_{b_n})| &\leq & \re F(w_{b_n})+|\im F(w_{b_n})| \\
& \leq & \re F(w_{b_n})+|\im F(w_{a_n})|+\alpha \max\{\re F(w_{a_n}),\re F(w_{b_n})\} + \beta\\
& \leq &(\alpha+2)\max\{|F(w_{a_n})|,\re F(w_{b_n})\} + \beta\\
& \leq & 2(\alpha+2)c|F(w_{b_n})|=\frac12 |F(w_{b_n})|,
\end{eqnarray*}
which is a contradiction.

Now let $D$ be the constant from Lemma~\ref{gulfs}. Since $w_1 \in I(F)$, for $n$ sufficiently large we have $a_n/D >  \max\{\re p_n,1\}$ and so, by Lemma~\ref{gulfs}, there exists $w_{a_n}' \in L_{p_n,a_n}$ such that
\[
  \re F(w_{a_n}') \geq  M(a_n/D,F).
\]

We now apply Lemma~\ref{Ahlfors} again but this time we replace $w_{a_n}$ by the point $w_{a_n}'$.
Since $a_n \to \infty$ and hence $|F(w_{a_n}')| \to \infty$ as $n \to \infty$, it follows from Lemma~\ref{Ahlfors} that, if $n$ is sufficiently large, then
\[
\frac{|F(w_{b_n})|}{ M(a_n/D,F)} \geq \frac{|F(w_{b_n})|}{|F(w_{a_n}')|} \geq \exp(\tfrac{1}{2}(b_n-a_n) - 4\pi).
\]
Together with \eqref{modulus} this implies that, if $n$ is sufficiently large, then
\begin{equation}\label{max}
\frac{\re F(w_{b_n})}{ M(a_n/D,F)} > c \exp(\tfrac{1}{2}(b_n-a_n) - 4\pi) > \exp(\tfrac{1}{4}(b_n-a_n)).
\end{equation}

 Note that, for large $n$, the part of $F^n(\Gamma)$ that connects $F^n(w_0)$ to $F^n(w_1)$ must intersect $L_{p_n,b_n}$  -- otherwise $F^n(w_1)$ lies in the bounded component of $T \setminus L_{p,b_n}$ and so the geodesic joining $F^n(w_1)$ to $\infty$ in the tract $T$ must contain a point $w$ with $\re w = b_n = \frac{1}{2K'} \re F^n(w_1)$. This, however, is
impossible for large $n$ since $w_1 \in I(F)$ and $T$ has bounded wiggling with constants $K'$ and $\mu$.

Thus, for large $n$, we can choose $w_{b_n} \in L_{p_n,b_n} \cap F^n(\Gamma)$ such that $F^n(w_1)$ is in the tail of
$F^n(\Gamma)$ from $w_{b_n}$ to $\infty$. So, if $K>1$ and $M>0$ are chosen so that $F$ satisfies a linear head-start condition for $K$ and $M$, then, for large $n$,
\[
\re F (w_{b_n}) \leq K \re F^{n+1}(w_1) + M
\]
and hence
\begin{equation}\label{5}
\re  F^{n+1}(w_1) \geq \frac{1}{K}\re F (w_{b_n}) - \frac{M}{K} \geq \frac{1}{2K} \re F(w_{b_n}).
\end{equation}

It follows from~\eqref{max} and~\eqref{5} that, for $n$ sufficiently large,
\begin{eqnarray*}
 \re F^{n+1}(w_1) & > & \frac{1}{2K}\exp(\tfrac{1}{4}(b_n- a_n)) M(a_n/D,F)\\
  & = & \frac{1}{2K}\exp(\tfrac{1}{4}(b_n- a_n))M(\tfrac{1}{4DK'}\re F^n(w_1),F).
\end{eqnarray*}
The result now follows on taking $\eps = \frac{1}{4DK'}$, since
\[
b_n-a_n = \frac{1}{4K'}\re F^n(w_1) \to \infty \; \mbox{ as }n \to \infty.
\]
\end{proof}

  Finally, we show that the conclusion of Theorem~\ref{infinite} follows easily from Lemma~\ref{A(f)}. We first choose $N_1 \in \N$ and $\eps \in (0,1)$ to satisfy Lemma~\ref{A(f)}. We then choose $r_1> 0$ and $N_2 \geq N_1$ such that
\begin{equation}\label{bound}
M(r,F) > r \mbox{ for } r \geq r_1  \quad\mbox{and}\quad \re F^{N_2}(w_1) >\frac{1}{\eps} r_1.
\end{equation}
Arguing by induction, it follows from~\eqref{bound} and Lemma~\ref{A(f)} that
\[
 \re F^{n+N_2}(w_1) > \frac{1}{\eps}M^n(r_1,F) > M^n(r_1,F), \mbox{ for all } n \in \N,
\]
and so $w_1 \in A(F)$ as required.
\end{proof}

\section{Slow Devaney hairs}
\setcounter{equation}{0}

 In this section we define a function $F\in\Blogn$ such that
  $F$ has a Devaney hair that
  is disjoint from $A(F)$. All the tracts of $F$ are translates of each other by $2n\pi i$, for some $n \in \Z$, and have bounded slope and bounded wiggling. This shows that an additional assumption, such as bounded gulfs, is indeed necessary
  in Theorem~\ref{infinite}.  We also construct a transcendental entire function which has a logarithmic transform with these properties.

  The function $F$ is $2\pi i$-periodic and the domain of $F$ consists of a tract
  $T\subset \{z\in \H: \im z\in (-\pi,\pi)\}$ and all
  $2\pi i \Z$-translates of $T$.
  Thus it is sufficient to specify the domain $T$ and a
  conformal isomorphism $F:T\to\H$.

  The tract $T$ is determined by an increasing sequence $(r_k)_{k\geq 0}$ of
  real numbers with $r_k>r_{k-1}+1$ and $r_0>1$, and a second sequence
  $(\eps_k)_{k\geq 0}$ of positive real numbers, as illustrated in Figure \ref{fig:slowhairs}.


  \begin{figure}
 \begin{center}
\resizebox{.98\textwidth}{!} {\input{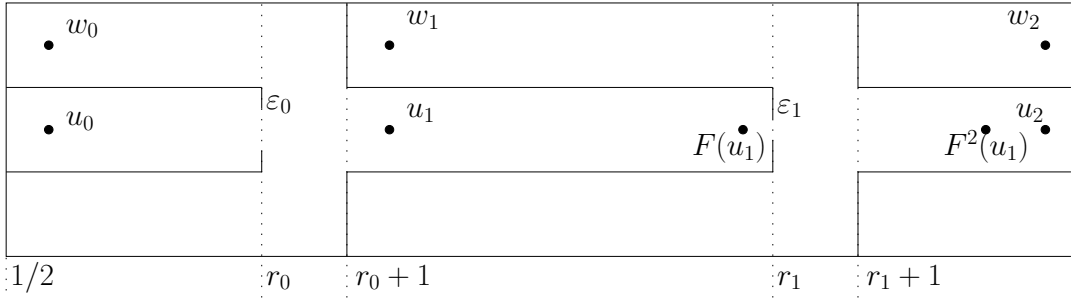tex}}
 \end{center}
 \caption{\label{fig:slowhairs}Construction of the tract $T$.}
\end{figure}

  To be precise, $T$ consists of all points $u+iv$, where
  $u>1/2$ and $|v|<\pi$, subject to the following restrictions for
  all $k\geq 0$. (We also use the convention that $r_{-1}=-\infty$.)
 \begin{enumerate}
  \item If $r_{k-1} + 1 < u < r_k$, then $|v|\neq\pi/3$.
  \item If $u= r_k$, then $|v|>\pi/3$ or $|v|<\eps_k$.
  \item If $r_k < u < r_k + 1$, then there are no restrictions on $v$.
  \item If $u = r_k + 1$, then $|v|<\pi/3$.
 \end{enumerate}
{\it Remark.}\quad  The boundary of $T$ is not a Jordan curve, and hence
  formally speaking the $2\pi i$-periodic extension of a conformal
  isomorphism $F:T\to\H$ to $\T=T+2\pi i\Z$ does not belong to
  $\Blogn$. However, it will be clear from the construction that this defect is easily remedied by restricting $F$ to the
  set $F^{-1}(\{w:\re w > \eps\})$, for some $0<\eps<1/2$.

 Note that $T$ is symmetric about the real axis and has bounded slope and bounded wiggling. We choose
  $F:T\to\H$ to be the unique conformal isomorphism
  with $F(1)=1$ and $F(\infty)=\infty$.
  Note that $F$ maps the interval $[1,\infty)$ to itself, that~$1$ is the unique real fixed point of $F$, and that this fixed point
  is repelling. (To see this, apply the Schwarz lemma to the inverse branch
  $F^{-1}:\H\to T$.) It follows that $F(u)>u$ and $u\in I(F)$ for all $u>1$;
  in particular, $[1,\infty)$ is a Devaney hair of~$F$.

 In addition to the sequences $(r_k)_{k\geq 0}$ and $(\eps_k)_{k\geq 0}$, the definition of $T$ also involves sequences $(u_k)_{k\geq 0}$ and $(w_k)_{k\geq 0}$, where $u_k\in\R$ and
  $w_k = u_k + 2\pi i /3$. It is not difficult to show that it is possible to choose
  these sequences in such a way that, for each $k \geq 0$,
 \begin{enumerate}
 \item[(a)] $u_k,\,  F(u_k) \in (r_{k-1}+2,r_k - 1)$;
 \item[(b)] $u_{k+1} > F^2(u_k)$;
 \item[(c)] $\re F(w_k) > u_{k+1}$.
 \end{enumerate}
 (We give the details in Proposition \ref{prop:sequences} below.)
  These properties imply that  $(1,\infty) \subset I(F) \setminus A(F)$, as we now show.

\begin{lemma} \label{lem:slowhairs}
 Let $F:\T\to H$ belong to the class $\Blogn$.
   Suppose that a tract $T$ of~$F$ contains an interval
   $[a_0,\infty)$ of the real axis, and that $F$ is real; that is,
   $\T$ is symmetric with respect to the real axis and
   $F(\overline{z})=\overline{F(z)}$.

  Assume that there are sequences $u_k\in\R$ and $w_k\in\T$, with
  $u_k\to\infty$ and $u_k=\re w_k$, that satisfy properties (b) and (c) above. Then
   $\R\cap A(F)=\emptyset$. Further, there exists $a \in \R$ such that $[a, \infty)$ is a Devaney hair. Thus $F$ has a Devaney hair that is disjoint from $A(F)$.
\end{lemma}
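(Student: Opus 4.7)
The plan is to prove the two assertions of the lemma separately: first (i) exhibit a Devaney hair along the real axis, then (ii) show that no real point lies in $A(F)$. The key idea behind (ii) will be to compare the slow growth of real orbits (forced by hypothesis (b)) with the fast growth of the iterated maximum modulus (forced by hypothesis (c)).

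For (i), I would first observe that $F$ is strictly increasing on $[a_0,\infty)$. Real symmetry makes $F'$ real-valued there; the $\Blogn$-normalisation $|F'|\geq 2$ on $\T$ forces $F'$ to have constant sign on this connected interval; and the possibility $F'\leq -2$ is excluded because $F(\R\cap T)\subset(0,\infty)$ cannot be bounded while $|F(u)-F(a_0)|\geq 2(u-a_0)\to\infty$. Hence $F(u)-F(a_0)\geq 2(u-a_0)$, so we may choose $a\geq a_0$ with $F(u)>u$ on $[a,\infty)$. The interval $[a,\infty)$ is then $F$-invariant, $F^n$ converges monotonically to $\infty$ on it (hence uniformly on each $[a+t,\infty)$ with $t>0$), and each $F^n([a,\infty))=[F^n(a),\infty)$ is a simple arc to $\infty$. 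So $[a,\infty)$ is a Devaney hair of $F$.

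For (ii), hypothesis (c) together with the monotonicity of $M(\cdot,F)$ gives by induction that
\[ M^n(u_0,F)>u_n\quad\text{for every }n\in\N, \]
since $M(u_k,F)\geq\re F(w_k)>u_{k+1}$. Hypothesis (b), combined with the monotonicity of $F$ on $[a_0,\infty)$ from Step~1, gives for any real $u$ with $u\leq u_{k_0}$ the slow-growth bound $F^{2k}(u)<u_{k_0+k}$ for all $k$, again by induction. Assume for contradiction that some real $u$ lies in $A(F)$. Then for some $L\in\N$ and any admissible $R\geq u_0$,
\[ \re F^{n+L}(u)\geq M^n(R,F)\geq M^n(u_0,F)>u_n\quad\text{for every }n\in\N. \]
Setting $n=2k$ and using $\re F^{n+L}(u)=F^{n+L}(u)$, the slow-growth bound yields $F^{2k+L}(u)<u_{k_0+k+\lceil L/2\rceil}$, where the single extra iterate needed when $L$ is odd is absorbed by $F(u_m)<F^2(u_m)<u_{m+1}$ (valid once $u_m\geq a$). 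Hence $u_{k_0+k+\lceil L/2\rceil}>u_{2k}$, which fails for all sufficiently large $k$ because $(u_k)$ can be assumed strictly increasing and $k_0,L$ are fixed. Thus $\R\cap A(F)=\emptyset$, and combining with (i) gives a Devaney hair disjoint from $A(F)$.

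The only subtle point I anticipate is ensuring that iterates of real points remain on the real axis, so that the monotonicity of $F$ can be applied at every step; this is automatic from $F(\overline z)=\overline{F(z)}$ and the $F$-invariance of $[a,\infty)$ established in Step~1. The remainder is a direct numerical comparison, precisely calibrated to the hypotheses (b) and (c).
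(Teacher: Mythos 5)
Your proposal is correct, and it is most interesting to compare the two halves separately. For the core claim $\R\cap A(F)=\emptyset$ you use exactly the paper's mechanism: hypothesis (c) gives $M^n(u_0,F)>u_n$ by induction, hypothesis (b) together with monotonicity of $F$ on the real ray gives the slow-growth bound $F^{2j}(u)<u_{k_0+j}$, and the two estimates are incompatible with the defining inequality of $A(F)$. The paper extracts the contradiction slightly more cleanly: for the given $L$ it tests the inequality only at the single index $n=L+2k$, so that $n+L=2(L+k)$ is automatically even; this avoids your parity correction $F(u_m)<F^2(u_m)<u_{m+1}$ and the appeal to eventual strict monotonicity of $(u_k)$ (which does hold for large $k$, since $u_{k+1}>F^2(u_k)>u_k$ once $u_k\geq a$). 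For the Devaney hair you take a genuinely different and more elementary route: the paper invokes the Ahlfors distortion estimate (Lemma~\ref{Ahlfors}) to obtain a ray $[a,\infty)\subset I(F)$, whereas you use real symmetry plus the normalisation $|F'|\geq 2$ on $\T$ to show that $F$ is increasing on $[a_0,\infty)$ with $F(u)\geq F(a_0)+2(u-a_0)$, hence eventually $F(u)>u$, giving an invariant ray on which $F^n\to\infty$ uniformly; this is self-contained and also furnishes the monotonicity that both arguments need. One shared informality: like the paper, you compare the orbit of an arbitrary real $u\in A(F)$ with the points $u_k$ from time $0$ onwards, which tacitly assumes that the early iterates of $u$ already lie in the real segment where $F$ is increasing; since a real orbit in $I(F)$ is real with real parts tending to $\infty$, it eventually enters $[a,\infty)$, so this is repaired by starting the comparison at a later time, and it is not a defect of your argument relative to the paper's.
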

 \begin{proof}

We first show that $\R \cap A(F) = \emptyset$.
 Since, for $k \geq 0$, we have
 \[ \re F(w_k) > u_{k+1} = \re w_{k+1}, \]
 we must have
 \begin{equation}\label{M^n}
 M^n(u_0,F) > u_{n}, \mbox{ for } n \in \N.
\end{equation}
 On the other hand, it follows from~(b) that, for $k, j\ge 0$,
\begin{equation}\label{2j}
 F^{2j}(u_k) < u_{k+j}.
\end{equation}
Now, let $u\in\R$ and let $k$ denote the smallest integer for which
  $u_k> u$. Let $L\in\N$ and set $n=L+2k$. Then, by~\eqref{M^n} and \eqref{2j},
\[ F^{n+L}(u) = F^{2L+2k}(u) < F^{2L+2k}(u_k) <
    u_{L+2k} = u_n < M^n(u_0,F). \]
 It follows that $u$ does not belong to $A(F)$, as claimed.

 We conclude the proof by noting that it follows from Lemma~\ref{Ahlfors} that there exists $a>0$ such that $[a, \infty) \in I(F)$. Further, $F^n(w) \to \infty$ uniformly on $[a, \infty)$ and  $F^n([a,\infty))$ is a simple curve connecting $F^n(a)$ to $\infty$. Hence $[a, \infty)$ is a Devaney hair that does not intersect $A(F)$.
\end{proof}

We now use Arakelian's theorem to prove that there exists a transcendental entire function such that a logarithmic transform of this function satisfies the hypotheses of Lemma~\ref{lem:slowhairs}. It seems likely that
 such a function can also be constructed to lie in the class $\B$,
 by using a similar argument to that in \cite[Section 7]{RRRS}, but we will not show
 this here.

\begin{theorem}\label{hair}
 There exists a transcendental entire function $h$ with a logarithmic tract $V$ such that
  $V$ contains a Devaney hair of~$h$ which does not meet $A'(h,V)$.
\end{theorem}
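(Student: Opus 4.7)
My plan is to realise the model function $F$ constructed in this section as (essentially) the logarithmic transform of an entire function $h$ obtained from Arakelian's theorem, in the same spirit as the construction in Section~7 of \cite{RRRS}. Once this is done, the Devaney hair $[a,\infty)$ of $F$ exhibited in Lemma~\ref{lem:slowhairs} will lift under $\exp$ to a Devaney hair of $h$ inside $V=\exp(T)$ that is disjoint from $A'(h,V)$.

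The first step is to build a candidate model $\psi$ for $h$. Since $T\subset\{|\im w|<\pi\}$, the exponential $\exp|_{\overline T}$ is injective, so setting $\psi(e^w)=\exp(F(w))$ defines a holomorphic function on a neighbourhood of $\exp(\overline T)$. The next step is to choose an Arakelian set $E\subset\C$ of the form $E=\exp(\overline T)\cup E_0$, where $E_0$ is a closed set with $\C\setminus E$ connected and locally connected at infinity and on which we set $\psi\equiv 0$; the set $E_0$ consists of a closed neighbourhood of the origin together with a system of closed ``slits'' running to infinity in between the components of $\exp(T)$. By Arakelian's approximation theorem, for any positive continuous function $\eps:E\to(0,\infty)$ with $\eps(z)\to 0$ as $|z|\to\infty$ we obtain an entire function $h$ with $|h-\psi|<\eps$ on $E$.

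Taking $\eps$ small enough, the function $h$ is bounded on $E_0$ (so that $h^{-1}(\{|w|>R\})$ is contained in a small neighbourhood of $\exp(T)$ for large $R$), while on $\exp(T)$ it follows $\psi$ closely and hence has $V\subset\exp(T)$ as a logarithmic tract. Let $\tilde F\in\Blogn$ be a logarithmic transform of $h$ on $\exp^{-1}(V)$. By construction, $\tilde F$ is a small perturbation of $F$ on the corresponding tract $\tilde T$, which is a slight deformation of $T$; standard estimates (using that $F$ has bounded slope and bounded wiggling and that $F(\overline z)=\overline{F(z)}$ after symmetrising $E$ and $\psi$ about the real axis) guarantee that $\tilde F$ inherits these three properties. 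Moreover, by making $\eps$ go to zero sufficiently fast we can arrange that the sequences $(u_k)$ and $(w_k)$ from the construction of $F$ can be replaced by nearby sequences $(\tilde u_k)\subset\R\cap\tilde T$, $\tilde w_k=\tilde u_k+2\pi i/3$, still satisfying conditions (b) and (c) preceding Lemma~\ref{lem:slowhairs}, and such that a tail of the real axis lies in $\tilde T$ and is a Devaney hair of $\tilde F$. Lemma~\ref{lem:slowhairs} applied to $\tilde F$ then produces a Devaney hair of $\tilde F$ disjoint from $A(\tilde F)$, whose image under $\exp$ is a Devaney hair of $h$ inside $V$ disjoint from $A'(h,V)$.

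The main obstacle is quantitative control of the approximation: we must choose $\eps$ small enough, and decaying quickly enough at infinity, that (i) the geometry of the tract (bounded slope and bounded wiggling with explicit constants) is preserved, (ii) the fixed point on the positive real axis persists and remains repelling, so that the real tail is still a Devaney hair, and, most delicately, (iii) the slow-growth inequalities $F^{2j}(u_k)<u_{k+j}$ and $\re F(w_k)>u_{k+1}$ survive the perturbation. The last point is the delicate one, but because the relevant quantities grow at a definite rate as $k\to\infty$, a sufficiently rapid decay of $\eps$ at infinity suffices, exactly as in the corresponding argument of \cite{RRRS}. Once the perturbed sequences are in place, the conclusion follows immediately from Lemma~\ref{lem:slowhairs} and the fact that $\exp A'(\tilde F)\subset A'(h,V)$.
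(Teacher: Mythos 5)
Your overall strategy is the same as the paper's: exponentiate the model $F$ to get a function on (a neighbourhood of) $\exp T$, approximate it by an entire function $h$ via Arakelian's theorem, pass to a logarithmic transform $H$ of $h$ on the perturbed tract, check that the slow-growth conditions (b) and (c) persist, and then apply Lemma~\ref{lem:slowhairs} and project by $\exp$. However, the step you yourself identify as delicate is genuinely gapped as written. First, the approximation theorem you invoke -- an entire $h$ with $|h-\psi|<\eps(z)$ for an \emph{arbitrary} error function $\eps(z)\to 0$ -- is not Arakelian's theorem but tangential (Carleman--Nersesyan) approximation, which requires additional hypotheses on the set and is not available for free here. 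Second, even granting such decay, your stated reason that the conditions survive (``the relevant quantities grow at a definite rate as $k\to\infty$'') does not supply what is actually needed: conditions (b) and (c) are strict inequalities with no built-in margin, and since $F$ is strongly expanding, an additive perturbation of the argument produces a multiplicative change in $F$, so no rate of growth by itself guarantees that $\tilde F^2(u_k)<u_{k+1}$ or $\re\tilde F(w_k)>u_{k+1}$ survive. The paper resolves exactly this by building explicit margins into the model: it replaces (a), (b), (c) by the strengthened conditions (a$'$), (b$'$), (c$'$) (with the extra $+k$ terms), which Proposition~\ref{prop:sequences} shows are achievable with arbitrary margin sequences, and then a \emph{constant} error $1/4$ from plain Arakelian suffices, the perturbation being absorbed using monotonicity of $F$ on $[1,\infty)$ and the intermediate quantity $F(r_k-1)$. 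Your proposal could be repaired either by invoking these margins or by fixing the gaps in the strict inequalities first and choosing the error small relative to them, but as stated the key inequality-persistence step does not go through.

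Two smaller points. Your parenthetical claim that controlling $h$ on $E=\exp(\overline T)\cup E_0$ forces $h^{-1}(\{|w|>R\})$ to lie near $\exp(T)$ is unjustified (Arakelian gives no control off $E$), but it is also unnecessary: $A'(h,V)$ and the logarithmic-tract property only involve the behaviour of $h$ on $V$, which is why the paper approximates only on $A=\tilde f^{-1}(\{|z|\ge 5/4\})$ and adds no auxiliary set. Finally, you assert rather than prove that $V$ is a logarithmic tract; the paper supplies the short but necessary argument that $V$ is simply connected (minimum principle) and that $H:\tilde T\to\{\re w>\ln(3/2)\}$ is proper of degree one, hence a conformal isomorphism, and also notes that only conditions (b), (c), realness and the Ahlfors estimate (Lemma~\ref{Ahlfors}) are needed for Lemma~\ref{lem:slowhairs}, so your appeal to preserving bounded slope and wiggling is superfluous.
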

\begin{proof}
 Let the tract $T$ and the function $F:T \to \H$  be as described before Lemma~\ref{lem:slowhairs}, where the sequences
  $(r_k)$, $(\eps_k)$, $(u_k)$ and $(w_k)$ are chosen in such a way that
 \begin{enumerate}
 \item[(a${}'$)] $u_k,\,  F(u_k) \in (r_{k-1}+2,r_k - 1-k)$;
 \item[(b${}'$)] $u_{k+1} > F(r_k-1)+k$;
 \item[(c${}'$)] $\re F(w_k) > u_{k+1}+k$.
 \end{enumerate}
 (Again, we show that this is possible in
  Proposition \ref{prop:sequences}.)

 Let $\tilde{f}:\exp T\to \{z:|z|>1\}$ be defined by
   $\tilde{f}(\exp w) = \exp F(w)$, and let $f$ be the restriction of
   $\tilde{f}$ to
   \[A = \tilde{f}^{-1}(\{z:|z|\geq5/4\})=\exp F^{-1}(\{w:\re w\ge\ln(5/4)\}).\]

 Now, the complement of $A$ is connected and locally connected at $\infty$. Thus, by
 Arakelian's theorem \cite[page~142]{gaier}, there is an entire function $h:\C\to\C$
 such that $|f(z)-h(z)|<1/4$, for all $z\in A$. Since $f$ is real, we may
  also assume that $h$ is real (otherwise, consider the map
  $(h(z)+\overline{h(\overline{z})})/2$ instead).

 Let $V$ be the component of $h^{-1}(\{z:|z|>3/2\})$ that is contained in
  $A$ and contains some infinite piece of the positive real axis.
  Since $A$ contains no zeros of $h$, the set
  $V$ is simply connected by the minimum
  principle. Let $\tilde{T}$ be the component of $\exp^{-1}(V)$
   contained in $T$. There is a holomorphic map
   \[ H:\tilde{T}\to \{w: \re w > \ln(3/2)\} \]
  with $\exp\circ H = h\circ \exp$. By our choice of $h$, we have
    \begin{equation}\label{eqn:H_and_F}
     |H(w) - F(w)| < 1/4,  \quad\text{for all }w\in\tilde{T}.
     \end{equation}

  Note that $H$ is a proper map: indeed, $\re H(w)\to \ln(3/2)$ as
  $w\to \partial \tilde{T}$, while (\ref{eqn:H_and_F}) implies that
  $H(w)\to\infty$ as $w\to\infty$. So $H$ has a well-defined degree, and
  it is easy to see that this degree is one. Indeed, if $\gamma$ is
  a simple closed loop in $\tilde{T}$, sufficiently close to the boundary,
  then it follows from (\ref{eqn:H_and_F}) that $H(\gamma)$ winds exactly
  once around the point~$1$.

 So $H$ is a conformal isomorphism, and in particular $V$ is a logarithmic
  tract of $h$. We denote
  the $2\pi i$-periodic extension of $H$ also by $H$. Then
  $H\in\Blog$ is a logarithmic transform of $h$, which is not normalised.
  By (\ref{eqn:H_and_F}), and properties~(a${}'$), (b${}'$) and~(c${}'$), the map $H$ satisfies properties~(b)
   and~(c), for sufficiently large $k$. Thus, by Lemma~\ref{lem:slowhairs}, $H$ has a Devaney hair $[a,\infty)\subset\R$ that
  does not intersect $A(H)$. It follows
  that $[e^a,\infty)$ is a Devaney hair of $h$ that does not intersect
  $A'(h,V)$.
\end{proof}

{\it Remark.} It is in fact possible to construct a transcendental entire function $g$ with a logarithmic tract $V$ such that $V$ contains a component $C$ of $J(g)$ that is a Devaney hair of $g$ with $C \cap A'(g,V) \neq \emptyset$. In order to do this, we define $g(z) = h(z)/ \lambda$ and consider a tract $V$ of $h$, where $h$ and $V$ are as in the proof of Theorem~\ref{hair} and $\lambda > \max \{3/2, M(5/4,h)\}$. This ensures that $\{z: |z| \leq 5/4\} \subset F(g)$ and also that $V$ is a tract of $g$ with $\partial V \subset F(g)$. We then set $G(w) = H(w) - L$, where $L = \ln \lambda$ and denote the $2\pi i$-periodic extension of $G$ by $G$. The function $G$ is of {\it disjoint type}; that is, the tracts of $G$ are contained in the image of $G$ and so, arguing as in the proof of~\cite[Theorem 5.10]{RRRS}, it can be shown that there exists $a \in \R$ such that $[a, \infty)$ is a component of $J(G)$. Further, by Lemma~\ref{lem:slowhairs}, $[a,\infty)$ is a Devaney hair and $[a, \infty) \cap A(G) = \emptyset$. Then, because $G$ is of disjoint type and $\partial V \subset F(g)$, it can be shown that $C = [e^a, \infty)$ is a component of $J(g)$ that is a Devaney hair of $g$ with $C \cap A'(g,V) = \emptyset$.

To conclude the section, we show that the tract $T$ can
 indeed be chosen with the desired properties.
\begin{proposition}\label{prop:sequences}
 Let $(\delta_k)_{k \geq 0},(\eta_k)_{k \geq 0}$ and $(\theta_k)_{k \geq 0}$ be arbitrary sequences of positive real numbers.
 Then there exist sequences $(r_k)_{k \geq 0}, (\eps_k)_{k \geq 0}$ and $(u_k)_{k \geq 0}$ with $u_k \in T$, for $k \geq 0$, such that,
  if $T$ and $F$ are as described before Lemma~\ref{lem:slowhairs}, and $w_k = u_k + 2\pi i/3$, then
   \begin{itemize}
 \item[(A)] $u_k,\,  F(u_k) \in (r_{k-1}+2,r_k - 1 - \delta_k)$;
 \item[(B)] $u_{k+1} > F(r_k-1)+\eta_k$;
 \item[(C)] $\re F(w_k) > u_{k+1}+\theta_k$.
   \end{itemize}
\end{proposition}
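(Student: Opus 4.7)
The sequences $(r_k)$, $(\eps_k)$, $(u_k)$ are built by induction on~$k$. Suppose $r_0,\dots,r_{k-1}$, $\eps_0,\dots,\eps_{k-1}$, $u_0,\dots,u_{k-1}$ have been chosen and, in particular, that the partial tract they determine makes sense. I would first pick $u_k$ to be any real number exceeding $r_{k-1}+2$ (yielding the lower part of~(A) at step~$k$, and compatible with (B) from step~$k-1$ once $F(r_{k-1}-1)$ has been bounded above), then provisionally select $r_k$, and finally take $\eps_k$ very small in order to enforce conditions (A) and~(C); condition~(B) is then secured at the next step by taking $u_{k+1}>F(r_k-1)+\eta_k$.

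The crucial effect of small $\eps_k$ is to pinch the middle channel of~$T$ at the line $u=r_k$. Via Carath\'eodory kernel convergence with basepoint~$1$, as $\eps_k\to 0$ the restriction of~$F$ to the component of the complement of the pinch containing~$1$ converges to the conformal map of the ``left piece'' of~$T$; this is a bounded simply connected domain determined by the parameters already chosen together with $r_k$. In particular, $F(u_k)$ and $F(r_k-1)$ converge to finite limit values $F_*(u_k)$, $F_*(r_k-1)$. By taking $r_k$ large enough that these limits lie below $r_k-1-\delta_k$, and then $\eps_k$ small enough that the actual values are close to the limits, condition~(A) holds with a margin.

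For condition~(C), observe that $w_k=u_k+2\pi i/3$ lies in the upper ``finger'' of step~$k$, whose only access to the rest of~$T$ is through the corridor $(r_k,r_k+1)$ (the caps at $u=r_{k-1}+1$ and $u=r_k+1$ prevent any other exit). Hence every path in~$T$ from~$1$ to~$w_k$ must traverse the narrow middle passage at $u=r_k$, contributing at least $|\log\eps_k|-O(1)$ to the hyperbolic distance $d_T(1,w_k)$; consequently $|F(w_k)|=\exp d_T(1,w_k)\to\infty$ as $\eps_k\to 0$. The geometry of the finger---a rectangle of bounded aspect ratio at $v\in(\pi/3,\pi)$, symmetric to its twin below the real axis---ensures that as the finger is stretched conformally into~$\H$ its image remains a region in which $\arg F(w_k)$ stays bounded away from $\pi/2$; equivalently, $\re F(w_k)$ grows comparably with $|F(w_k)|$. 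Hence $\re F(w_k)\to\infty$ as $\eps_k\to 0$, and (C) is ensured by taking $\eps_k$ small enough.

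The main obstacle is that the final conformal map~$F$ depends on \emph{all} the parameters, including those chosen at later steps. However, adding further pinches can only shrink~$T$, and so can only perturb~$F$ monotonically at interior points by hyperbolic monotonicity (Schwarz's lemma). Provided the future $\eps_j$ for $j>k$ decay sufficiently fast---something we arrange via the diagonal structure of the induction---the cumulative perturbation of~$F$ on the bounded region $\{\re w\le r_k+1\}$ is smaller than the safety margins $\delta_k$, $\eta_k$, $\theta_k$, so the inequalities established at each step survive in the limit. Running the induction yields sequences $(r_k),(\eps_k),(u_k)$ for which (A), (B), (C) all hold.
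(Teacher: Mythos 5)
Your construction has a genuine gap at its load-bearing step, namely the control of how $F$ at already-chosen points depends on parameters not yet chosen. First, the geometric claim underlying your choice of $r_k$ is false: closing the pinch ($\eps_k\to 0$) does not cut off a bounded ``left piece''. The upper and lower fingers ($|v|>\pi/3$) pass through the line $u=r_k$ unobstructed and rejoin the central channel in the open corridor $(r_k,r_k+1)$, so the limit domain is still unbounded and depends on all later $r_j,\eps_j$; the ``finite limit values'' $F_*(u_k)$, $F_*(r_k-1)$ are therefore not computable at step $k$. Second, your final paragraph has the monotonicity backwards: making the future $\eps_j$ decay \emph{faster} shrinks $T$ \emph{more}, which by Schwarz/hyperbolic monotonicity pushes $F(u_k)$ and $F(r_k-1)$ \emph{up} -- exactly the dangerous direction for (A) and (B) -- and nothing in your argument shows the resulting (bounded, but not small) change stays below the arbitrary, possibly tiny, margins $\delta_k,\eta_k,\theta_k$. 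The paper never needs smallness of this perturbation: since $[1,\infty)$ is a geodesic with $\log F(u)=\dist_T(1,u)$ and $\rho_T(w)\asymp 1/\dist(w,\partial T)$ up to a factor $2$, one gets two-sided bounds $b(u)\le F(u)\le B(u)$ on the initial segment that are valid \emph{uniformly over all admissible future choices}, with $b(u)\to\infty$ as $\eps_k\to 0$; the induction then checks every inequality against these uniform bounds, choosing in order $u_{k+1}>\max\{r_k+2,\,B(r_k-1)+\eta_k\}$, then $\eps_k$ with $b(r_k+1)>(u_{k+1}+\theta_k)e^{12r_k}$, then $r_{k+1}>B(u_{k+1})+1+\delta_{k+1}$. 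Note also the ordering: $\eps_k$ is chosen \emph{after} $u_{k+1}$, which your scheme cannot do; since (C) involves $u_{k+1}$ and you fix $\eps_k$ first, you would again need a future-uniform upper bound on $F(r_k-1)$ to know how large $u_{k+1}$ may be forced to be.

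There is a second gap in your verification of (C). From ``every path from $1$ to $w_k$ crosses the pinch'' (which is correct) you infer $|F(w_k)|=\exp \dist_T(1,w_k)\to\infty$; but only $|F(w_k)|\le\exp \dist_{\H}(1,F(w_k))$ holds, and a large hyperbolic distance from $1$ in $\H$ is perfectly compatible with $F(w_k)$ remaining bounded while approaching the imaginary axis. Your repair -- that $\arg F(w_k)$ stays bounded away from $\pi/2$, so $\re F(w_k)\asymp|F(w_k)|$ -- is unjustified and cannot hold with constants independent of $k$: the point $w_k$ lies at hyperbolic distance of order $r_k$ from the geodesic $[1,\infty)$ (its only route to the real axis runs out through the mouth of the finger, of Euclidean length comparable to $r_k-u_k$), so $\re F(w_k)/|F(w_k)|$ may be as small as $e^{-c r_k}$. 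The paper's argument quantifies exactly this loss: a curve from $w_k$ to the real point $r_k+1$ of length less than $3r_k$, staying at distance at least $1/2$ from $\partial T$, gives $\dist_T(w_k,r_k+1)\le 12 r_k$, hence $\re F(w_k)\ge F(r_k+1)e^{-12 r_k}\ge b(r_k+1)e^{-12 r_k}$, and $\eps_k$ is then taken small enough that $b(r_k+1)$ exceeds $(u_{k+1}+\theta_k)e^{12 r_k}$. So your pinch-crossing observation must be applied to the real point $r_k+1$ beyond the pinch, where hyperbolic distance equals $\log F$, and the $e^{-12r_k}$ penalty for moving from $r_k+1$ into the finger must be known and beaten when $\eps_k$ is calibrated; as written, your proposal does not establish (C), and its mechanism for protecting (A) and (B) against later choices does not work.
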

\begin{proof}
 For a hyperbolic domain $U$, let $\dist_U(.,.)$ denote hyperbolic distance in~$U$ and $\rho_U(.)$ denote hyperbolic density in~$U$; see~\cite{CG}, for example. Since the curve $[1,\infty)$ is a hyperbolic geodesic in $T$, we have
    \[ \log(F(u)) = \dist_{\H}(1,F(u)) =
       \dist_{T}(1,u) = \int_{1}^u \rho_T(t)dt, \]
  for all $u\in [1,\infty)$. Now, by~\cite[Theorem~4.3]{CG}, we have
  \[
    \frac{1}{2\dist(w,\partial T)} \leq \rho_T(w) \leq \frac{2}{\dist(w,\partial T)}, \; \mbox{ for } w \in T.
  \]
   Therefore
   we can easily estimate the behaviour of $F$ in an initial segment of the tract, independently of the choices of
   $r_k$ and $\eps_k$ for large $k$.

  More precisely, these estimates for $\rho_T(.)$ show that if $u\in [1,\infty)$ and
  $u \geq r_{k}$, for $k\ge -1$, then there exist finite positive numbers $b(u)$ and $B(u)$ depending on $r_j$ and $\eps_j$, for $0 \leq j \le k$, such that
   \[
    b(u) \leq F(u) \leq B(u),
   \]
for all possible choices of $r_j$ and $\eps_j$, $j > k$, for which
 $r_{k+1} > u + 1$. Moreover $b(u) \to \infty$ as $\eps_k \to 0$.
 Note that $B(u)\geq F(u)\geq u$ for all $u\ge 1$.

 Also note that, for $k \geq 0$, we can connect $w_k$ to $r_{k}+1$ in $T$ by a curve
  of length
  \[(r_{k}+1)-u_k+2\pi/3 < 3r_{k}\]
  that has distance at least
  $1/2$ from $\partial T$. Hence the hyperbolic distance from $w_k$ to
  $r_{k}+1$ is at most $12r_{k}$. We can use this observation to bound the
  real part of $F(w_k)$ from below in terms of $F(r_k+1)$. Indeed,
  \[ \dist_T(w_k,r_k+1) = \dist_{\H}(F(w_k),F(r_k+1)) \geq
      \log(F(r_k+1)/\re F(w_k)), \]
   and hence
    \[ \re F(w_k) \geq F(r_k+1) \exp(-12 r_k). \]

 We now define the sequences $u_k$, $r_k$ and $\eps_k$ inductively.
  We begin by choosing
  $u_0=1$ and $r_0>\delta_0+2$.

 If we have defined $r_j$ and $u_j$ for $0 \leq j \leq k$,
  and $\eps_j$ for $0 \leq j < k$, then we choose (in order),
  $u_{k+1}$ so that
 \[  u_{k+1} > \max\{r_{k}+2,B(r_{k}-1)+\eta_k\}, \]
 $\eps_{k}$ so that
\[ \quad b(r_{k}+1)>(u_{k+1}+\theta_k)\exp(12r_{k})\]
  and $r_{k+1}$ so that
\[r_{k+1}> B(u_{k+1})+1+ \delta_{k+1}. \]

 (Recall that if $u\ge r_k$, then $b(u)\to\infty$ as $\eps_{k}\to 0$, and hence
   the desired choice of $\eps_k$ is indeed possible.)

 This completes the inductive description. Clearly properties (A) and (B) are
  satisfied by construction. To verify property (C), note that
   \begin{align*}
     \re F(w_k) &\geq F(r_k+1)\exp(-12r_k) \\ & \geq
        b(r_k+1)\exp(-12r_k) > (u_{k+1}+\theta_k), \end{align*}
   as desired.
\end{proof}
{\it Remark.}\quad
 Note that the proof shows that
  the sequences $\delta_k,\eta_k$ and $\theta_k$ could also be allowed to
  depend on previous values of $r_j$, $\eps_j$ and $u_j$. More precisely,
  they can depend on the values $u_j$, $j\leq k$, and on $r_j$ and
  $\eps_j$, $j<k$. Additionally, $\eta_k$ may also depend on $r_k$,
   and $\theta_k$ on $r_k$ and $u_{k+1}$.

\end{document}